%
%

\documentclass{irmaart}

\usepackage{latexsym,amsmath,amssymb,amsfonts}  
\usepackage{color}
\usepackage{colordvi}
\usepackage{xspace}
\usepackage{axodraw}
\usepackage[all]{xy}            
\usepackage{graphicx}
\usepackage{pstricks}

%
%

%
%
\setlength{\textheight}{23cm} 
\setlength{\textwidth}{17cm}
\setlength{\topmargin}{-1cm} 
\setlength{\oddsidemargin}{-.54cm}
\setlength{\evensidemargin}{-.54cm}

%
%

\newcommand{\N}{\mathbb{N}}
\newcommand{\Z}{\mathbb{Z}}
\newcommand{\R}{\mathbb{R}}
\newcommand{\CC}{\mathbb{C}}
\newcommand{\K}{\mathbb{K}}

\renewcommand{\S}{\mathbb{S}}
\font\sevenrm=cmr7
\font\fiverm=cmr5
\long\def\ignore#1{}
\newcommand{\nc}{\newcommand}
\nc{\mop}[1]{\mathop{\hbox {\rm #1} }\nolimits}
\nc{\gmop}[1]{\mathop{\hbox {\bf #1} }\nolimits}
\nc{\smop}[1]{\mathop{\hbox {\sevenrm #1} }\nolimits}
\nc{\ssmop}[1]{\mathop{\hbox {\fiverm #1} }\nolimits}
\nc{\mopl}[1]{\mathop{\hbox {\rm #1} }\limits}
\nc{\smopl}[1]{\mathop{\hbox {\sevenrm #1} }\limits}
\nc{\ssmopl}[1]{\mathop{\hbox {\fiverm #1} }\limits}
\nc{\Cal}[1]{{\mathcal {#1}}}
\def\inj#1{\mathop{\hbox to #1 mm{$\lhook\joinrel$\rightarrowfill}}\limits}
\def\diagramme #1{\vskip 4mm \centerline {#1} \vskip 4mm}
\renewcommand{\d}{\mathrm{d}}
\newcommand{\BB}{\mathrm{B}}
\newcommand{\id}{\mathrm{id}}
\newcommand{\Diff}{\mathrm{Diff}}
\newcommand{\Gdif}{G^{\smop{dif}}}
\newcommand{\Gdifop}{G^{\smop{dif},\smop{op}}}
\newcommand{\Ginv}{G^{\smop{inv}}}

\newcommand{\GRT}{G_{\smop{RT}}}
\newcommand{\GFG}{G_{\smop{FG}}}
\newcommand{\ovGFG}{\overline{G}_{\smop{FG}}}
\newcommand{\Groups}{\mathrm{Groups}}
\newcommand{\HFdB}{\Cal H_{\smop{FdB}}}
\newcommand{\HFdBnc}{\Cal H_{\smop{FdB}}^{\smop{nc}}}
\newcommand{\HFdBncop}{\Cal H_{\smop{FdB}}^{\smop{nc},\smop{op}}}
\newcommand{\Hinv}{\Cal H_{\smop{inv}}}
\newcommand{\Hinvnc}{\Cal H_{\smop{inv}}^{\smop{nc}}}
\newcommand{\HRT}{\mathcal H_{\smop{RT}}} 
\newcommand{\HFG}{\mathcal H_{\smop{FG}}} 
\newcommand{\ovHFG}{\overline{\mathcal H}_{\smop{FG}}} 
\newcommand{\HPRT}{\mathcal H_{\smop{PRT}}}
\newcommand{\D}{\Delta}
\newcommand{\Dinv}{\Delta_{\smop{inv}}}
\newcommand{\Dinvstar}{\Delta_{\smop{inv}}^{*}}
\newcommand{\DFdB}{\Delta_{\smop{FdB}}}
\newcommand{\DFdBop}{\Delta_{\smop{FdB}}^{\smop{op}}}
\newcommand{\DFdBnc}{\Delta_{\smop{FdB}}^{\smop{nc}}}
\newcommand{\DRT}{\Delta_{\smop{RT}}}
\newcommand{\DFG}{\Delta_{\smop{FG}}}
\newcommand{\wtDRT}{\wt{\Delta}_{\smop{RT}}}
\newcommand{\Vect}{\mathrm{Vect}}
\newcommand{\Span}{\mathrm{Span}}

\newcommand{\Der}{\mathrm{Der}\,}

\newcommand{\Prim}{\mathrm{Prim}\,}
\newcommand{\Ind}{\mathrm{Ind}}
\renewcommand{\P}{\mathcal{P}}
\newcommand{\gLie}{\mathfrak{g}}
\newcommand{\g}{\gamma}

\newcommand{\Alg}{\mathrm{Alg}}
\newcommand{\CAlg}{\mathrm{CAlg}}

\newcommand{\Hom}{\operatorname{Hom}}
\def\un{\hbox{\bf 1}}
\newcommand{\wt}{\widetilde}
\def \restr#1{\mathstrut_{\textstyle |}\raise-6pt\hbox{$\scriptstyle #1$}}
\def \srestr#1{\mathstrut_{\scriptstyle |}\hbox to
  -1.5pt{}\raise-4pt\hbox{$\scriptscriptstyle #1$}}
\def\over{\slash}
\def\under{\backslash}
%
%
\numberwithin{equation}{section}
\newtheorem{thm}{Th\'eor\`eme}

\newtheorem{prop}[thm]{Proposition}

%
%
\newcommand{\shu}{\,
\setlength{\unitlength}{3pt}
\begin{picture}(0,0)
\put(0,0){\line(1,0){2}}
\put(0,0){\line(0,1){1}}
\put(1,0){\line(0,1){1}}
\put(2,0){\line(0,1){1}}
\end{picture}\ \ \, }

\def\operation{\,{\scalebox{0.6}{  
  \begin{picture}(148,136) (9,-15)
    \SetWidth{1}
    \SetColor{Black}
    \GBox(9,27)(138,46){0.882}
    \Line(16,46)(16,100)
    \Line(127,46)(127,100)
    \Line(67,100)(67,46)
    \Line(73,27)(73,2)
    \Text(68,32)[lb]{\Large{\Black{$a$}}}
    \Text(-70,-15)[lb]{\Large{\Black{Graphical representation of an $n$-ary operation}}}
    \Text(18,49)[lb]{\large{\Black{$1$}}}
    \Text(35,61)[lb]{\huge{\Black{$\cdots$}}}
     \Text(85,61)[lb]{\huge{\Black{$\cdots$}}}
    \Text(71,49)[lb]{\large{\Black{$i$}}}
    \Text(130,49)[lb]{\large{\Black{$n$}}}
  \end{picture}
}}\,}

\def\operade{\,{\scalebox{0.6}{  
  \begin{picture}(148,136) (9,-15)
    \SetWidth{1}
    \SetColor{Black}
    \GBox(35,71)(103,89){0.882}
    \GBox(9,27)(138,46){0.882}
    \Line(60,120)(60,89)
    \Line(44,120)(44,89)
    \Line(91,120)(91,89)
    \Line(77,120)(77,89)
    \Line(16,46)(16,120)
    \Line(113,46)(113,120)
    \Line(127,46)(127,120)
    \Line(67,70)(67,46)
    \Line(73,27)(73,2)
    \Text(63,75)[lb]{\Large{\Black{$b$}}}
    \Text(68,31)[lb]{\Large{\Black{$a$}}}
    \Text(-5,-15)[lb]{\Large{\Black{Partial composition $a\circ_i b$}}}
    \Text(18,49)[lb]{\large{\Black{$1$}}}
    \Text(18,105)[lb]{\large{\Black{$1$}}}
    \Text(20,61)[lb]{\large{\Black{$\cdots$}}}
    \Text(96,61)[lb]{\large{\Black{$\cdots$}}}
    \Text(71,49)[lb]{\large{\Black{$i$}}}
    \Text(130,105)[lb]{\large{\Black{$n+m-1$}}}
    \Text(130,49)[lb]{\large{\Black{$n$}}}
  \end{picture}
}}\,}

\def\asoperade{\,{\scalebox{0.6}{  
\begin{picture}(585,186) (104,-20)
    \SetWidth{1}
    \SetColor{Black}
    \GBox(165,109)(227,126){0.882}
    \GBox(134,65)(253,83){0.882}
    \GBox(104,17)(319,36){0.882}
    \Line(173,156)(173,126)
    \Line(182,156)(182,126)
    \Line(192,156)(192,126)
    \Line(219,156)(219,126)
    \Line(138,156)(138,84)
    \Line(237,156)(237,84)
    \Line(248,156)(248,84)
    \Line(108,155)(108,36)
    \Line(262,154)(262,35)
    \Line(305,156)(305,37)
    \Line(194,109)(194,83)
    \Line(194,64)(194,36)
    \Line(206,17)(206,-3)
    \GBox(474,18)(689,37){0.882}
    \GBox(165,109)(227,126){0.882}
    \GBox(165,109)(227,126){0.882}
    \GBox(492,64)(554,81){0.882}
    \GBox(613,64)(675,81){0.882}
    \Line(502,112)(502,82)
    \Line(516,112)(516,82)
    \Line(544,112)(544,82)
    \Line(630,111)(630,81)
    \Line(641,111)(641,81)
    \Line(667,112)(667,82)
    \Line(621,111)(621,81)
    \Line(522,63)(522,37)
    \Line(644,64)(644,36)
    \Line(581,18)(581,-1)
    \Line(478,112)(478,38)
    \Line(562,111)(562,37)
    \Line(606,111)(606,37)
    \Line(685,112)(685,38)
    \Text(202,24)[lb]{\Large{\Black{$a$}}}
    \Text(190,70)[lb]{\Large{\Black{$b$}}}
    \Text(192,115)[lb]{\Large{\Black{$c$}}}
    \Text(578,24)[lb]{\Large{\Black{$a$}}}
    \Text(520,69)[lb]{\Large{\Black{$b$}}}
    \Text(641,69)[lb]{\Large{\Black{$c$}}}
    \Text(145,-20)[lb]{\Large{\Black{Nested associativity}}}
    \Text(525,-16)[lb]{\Large{\Black{Disjoint associativity}}}
    \Text(112,43)[lb]{\large{\Black{$1$}}}
    \Text(197,44)[lb]{\large{\Black{$i$}}}
    \Text(308,44)[lb]{\large{\Black{$k$}}}
    \Text(691,44)[lb]{\large{\Black{$k$}}}
    \Text(111,67)[lb]{\large{\Black{$\cdots$}}}
    \Text(141,91)[lb]{\large{\Black{$1$}}}
     \Text(624,91)[lb]{\large{\Black{$1$}}}
    \Text(197,90)[lb]{\large{\Black{$j$}}}
    \Text(251,90)[lb]{\large{\Black{$l$}}}
     \Text(672,90)[lb]{\large{\Black{$m$}}}
    \Text(275,67)[lb]{\large{\Black{$\cdots$}}}
    \Text(482,44)[lb]{\large{\Black{$1$}}}
    \Text(526,45)[lb]{\large{\Black{$i$}}}
    \Text(650,44)[lb]{\large{\Black{$j$}}}
    \Text(505,88)[lb]{\large{\Black{$1$}}}
    \Text(547,88)[lb]{\large{\Black{$l$}}}
    \Text(519,104)[lb]{\large{\Black{$\cdots$}}}
    \Text(644,104)[lb]{\large{\Black{$\cdots$}}}
    \Text(194,150)[lb]{\large{\Black{$\cdots$}}}
    \Text(575,76)[lb]{\large{\Black{$\cdots$}}}
  \end{picture}
}}\,}

\def\gcoperade{\,{\scalebox{0.6}{
\begin{picture}(500,181) (102,-149)
    \SetWidth{1}
    \SetColor{Black}
    \GBox(102,-100)(602,-72){0.882}
    \GBox(132,-29)(198,-10){0.882}
    \GBox(230,-29)(313,-10){0.882}
    \GBox(395,-29)(550,-10){0.882}
    \Line(141,31)(141,-10)
    \Line(157,31)(157,-10)
    \Line(181,31)(181,-10)
    \Line(241,31)(241,-10)
    \Line(259,31)(259,-10)
    \Line(279,31)(279,-10)
    \Line(300,31)(300,-10)
    \Line(405,31)(405,-10)
    \Line(421,30)(421,-10)
    \Line(448,29)(448,-10)
    \Line(486,31)(486,-10)
    \Line(534,31)(534,-10)
    \Text(338,-91)[lb]{\Large{\Black{$a$}}}
    \Text(159,-24)[lb]{\Large{\Black{$b_1$}}}
    \Text(266,-24)[lb]{\Large{\Black{$b_2$}}}
    \Text(463,-24)[lb]{\Large{\Black{$b_n$}}}
    \Line(166,-29)(166,-72)
    \Line(270,-29)(270,-72)
    \Line(471,-29)(471,-72)
    \Text(335,-20)[lb]{\Large{\Black{$\cdots$}}}
    \Line(344,-100)(344,-123)
    \Text(209,-149)[lb]{\Large{\Black{Total composition $\gamma(a;b_1,b_2,\ldots,b_n)$}}}
  \end{picture}
}}\,}

\def\racine{{\scalebox{0.3}{ 
\begin{picture}(12,12)(38,-38)
\SetWidth{0.5} \SetColor{Black} \Vertex(45,-33){5.66}
\end{picture}}}}
  
 \def\arbrea{\,{\scalebox{0.15}{ 
  \begin{picture}(8,55) (370,-248)
    \SetWidth{2}
    \SetColor{Black}
    \Line(374,-244)(374,-200)
    \Vertex(374,-197){9}
    \Vertex(375,-245){12}
  \end{picture}
}}\,}

 \def\arbreba{\,{\scalebox{0.15}{ 
\begin{picture}(8,106) (370,-197)
    \SetWidth{2}
    \SetColor{Black}
    \Line(374,-193)(374,-149)
    \Vertex(374,-146){9}
    \Vertex(375,-194){12}
    \Line(374,-142)(374,-98)
    \Vertex(374,-95){9}
  \end{picture}
}}\,}

 \def\arbrebb{\,{\scalebox{0.15}{ 
  \begin{picture}(48,48) (349,-255)
    \SetWidth{2}
    \SetColor{Black}
    \Vertex(375,-252){12}
    \Line(376,-250)(395,-215)
    \Line(373,-251)(354,-214)
    \Vertex(353,-211){9}
    \Vertex(395,-213){9}
  \end{picture}
}}}

\def\arbreca{\,{\scalebox{0.15}{
\begin{picture}(8,156) (370,-147)
    \SetWidth{2}
    \SetColor{Black}
    \Line(374,-143)(374,-99)
    \Vertex(374,-96){9}
    \Vertex(375,-144){12}
    \Line(374,-92)(374,-48)
    \Vertex(374,-45){9}
    \Line(374,-42)(374,2)
    \Vertex(374,5){9}
  \end{picture}
}}\,}

\def\arbrecb{\,{\scalebox{0.15}{
\begin{picture}(48,94) (349,-255)
\SetWidth{2}
\SetColor{Black}
\Line(376,-204)(395,-169)
\Line(373,-205)(354,-168)
\Vertex(353,-165){9}
\Vertex(395,-167){9}
\Vertex(374,-205){9}
\Line(374,-246)(374,-209)
\Vertex(374,-252){12}
\end{picture}}}\,}

\def\arbrecc{\,{\scalebox{0.15}{
 \begin{picture}(48,98) (349,-205)
    \SetWidth{2}
    \SetColor{Black}
    \Vertex(375,-202){12}
    \Line(376,-200)(395,-165)
    \Line(373,-201)(354,-164)
    \Vertex(353,-161){9}
    \Vertex(395,-163){9}
    \Line(353,-160)(353,-113)
    \Vertex(353,-111){9}
  \end{picture}
}}\,}

\def\arbrecd{\,{\scalebox{0.15}{
\begin{picture}(48,52) (349,-251)
    \SetWidth{2}
    \SetColor{Black}
    \Vertex(375,-248){12}
    \Line(376,-246)(395,-211)
    \Line(373,-247)(354,-210)
    \Vertex(353,-207){9}
    \Vertex(395,-209){9}
    \Line(375,-247)(375,-206)
    \Vertex(376,-203){9}
  \end{picture}
 }}\,}

\def\arbreda{\,{\scalebox{0.15}{
\begin{picture}(8,204) (370,-99)
    \SetWidth{2}
    \SetColor{Black}
    \Line(374,-95)(374,-51)
    \Vertex(374,-48){9}
    \Vertex(375,-96){12}
    \Line(374,-44)(374,0)
    \Vertex(374,3){9}
    \Line(374,6)(374,50)
    \Vertex(374,53){9}
    \Line(374,53)(374,98)
    \Vertex(374,101){9}
  \end{picture}
}}\,}

\def\arbredb{\,{\scalebox{0.15}{
\begin{picture}(48,135) (349,-255)
    \SetWidth{2}
    \SetColor{Black}
    \Line(376,-163)(395,-128)
    \Line(373,-164)(354,-127)
    \Vertex(353,-124){9}
    \Vertex(395,-126){9}
    \Vertex(374,-164){9}
    \Line(374,-205)(374,-168)
    \Vertex(374,-207){9}
    \Line(374,-248)(374,-211)
    \Vertex(374,-252){12}
  \end{picture}
}}\,}

\def\arbredc{\,{\scalebox{0.15}{
 \begin{picture}(48,150) (349,-205)
    \SetWidth{2}
    \SetColor{Black}
    \Line(376,-148)(395,-113)
    \Line(373,-149)(354,-112)
    \Vertex(353,-109){9}
    \Vertex(395,-111){9}
    \Line(353,-108)(353,-61)
    \Vertex(353,-59){9}
    \Line(374,-200)(374,-153)
    \Vertex(374,-149){9}
    \Vertex(374,-202){12}
  \end{picture}
}}\,}

\def\arbredd{\,{\scalebox{0.15}{
 \begin{picture}(48,99) (349,-251)
    \SetWidth{2}
    \SetColor{Black}
    \Line(376,-199)(395,-164)
    \Line(373,-200)(354,-163)
    \Vertex(353,-160){9}
    \Vertex(395,-162){9}
    \Vertex(376,-156){9}
    \Vertex(376,-248){12}
    \Line(375,-245)(375,-204)
    \Line(375,-200)(375,-159)
    \Vertex(375,-201){9}
  \end{picture}
}}\,}

\def\arbrede{\,{\scalebox{0.15}{
 \begin{picture}(48,153) (349,-150)
    \SetWidth{2}
    \SetColor{Black}
    \Vertex(375,-147){12}
    \Line(376,-145)(395,-110)
    \Line(373,-146)(354,-109)
    \Vertex(353,-106){9}
    \Vertex(395,-108){9}
    \Line(353,-105)(353,-58)
    \Vertex(353,-56){9}
    \Line(353,-52)(353,-5)
    \Vertex(353,-1){9}
  \end{picture}
}}\,}

\def\arbredf{\,{\scalebox{0.15}{
\begin{picture}(48,98) (349,-205)
    \SetWidth{2}
    \SetColor{Black}
    \Vertex(375,-202){12}
    \Line(376,-200)(395,-165)
    \Line(373,-201)(354,-164)
    \Vertex(353,-161){9}
    \Vertex(395,-163){9}
    \Line(353,-160)(353,-113)
    \Vertex(353,-111){9}
    \Line(395,-159)(395,-112)
    \Vertex(395,-111){9}
  \end{picture}
}}\,}

\def\arbredz{\,{\scalebox{0.15}{
  \begin{picture}(68,88) (329,-215)
    \SetWidth{2}
    \SetColor{Black}
    \Vertex(375,-212){12}
    \Line(376,-210)(395,-175)
    \Line(373,-211)(354,-174)
    \Vertex(353,-171){9}
    \Vertex(395,-173){9}
    \Line(351,-168)(332,-131)
    \Line(355,-168)(374,-133)
    \Vertex(333,-131){9}
    \Vertex(374,-131){9}
  \end{picture}
}}\,}

\def\arbredg{\,{\scalebox{0.15}{
\begin{picture}(48,98) (349,-205)
    \SetWidth{2}
    \SetColor{Black}
    \Vertex(375,-202){12}
    \Line(376,-200)(395,-165)
    \Line(373,-201)(354,-164)
    \Vertex(353,-161){9}
    \Vertex(395,-163){9}
    \Line(375,-201)(375,-160)
    \Vertex(376,-157){9}
    \Vertex(376,-111){9}
    \Line(375,-155)(375,-114)
  \end{picture}
}}\,}

\def\arbredh{\,{\scalebox{0.15}{
 \begin{picture}(90,46) (330,-257)
    \SetWidth{2}
    \SetColor{Black}
    \Vertex(375,-254){12}
    \Line(376,-252)(395,-217)
    \Vertex(395,-215){9}
    \Line(374,-254)(335,-226)
    \Vertex(334,-224){9}
    \Line(375,-252)(356,-215)
    \Vertex(355,-215){9}
    \Line(374,-255)(417,-227)
    \Vertex(418,-225){9}
  \end{picture}
}}\,}

%
%
%
\newcommand{\treeO}{
\setlength{\unitlength}{3pt}
\psset{unit=3pt}
\psset{runit=2pt}
\psset{linewidth=0.2}
\begin{pspicture}(0,0)(2.5,2)
\psline(1,-1)(1,1.5)
\end{pspicture}}


\newcommand{\treeA}{
\setlength{\unitlength}{3pt}
\psset{unit=3pt}
\psset{runit=2pt}
\psset{linewidth=0.2}
\begin{pspicture}(0,0)(2.5,2)
\psline(1,-1)(1,.5)
\psline(1,.5)(0,1.5)
\psline(1,.5)(2,1.5)
\end{pspicture}}



\newcommand\treeAB{
\setlength{\unitlength}{3pt}
\psset{unit=3pt}
\psset{runit=2pt}
\psset{linewidth=0.2}
\begin{pspicture}(0,0)(5,3)
\psline(3,-1)(3,.5)
\psline(3,.5)(1,2.5)
\psline(3,.5)(4,1.5)
\psline(2,1.5)(3,2.5)
\end{pspicture}}

\newcommand\treeBA{
\setlength{\unitlength}{3pt}
\psset{unit=3pt}
\psset{runit=2pt}
\psset{linewidth=0.2}
\begin{pspicture}(0,0)(5,3)
\psline(2,-1)(2,.5)
\psline(2,.5)(4,2.5)
\psline(2,.5)(1,1.5)
\psline(3,1.5)(2,2.5)
\end{pspicture}}


\newcommand\treeABC{
\setlength{\unitlength}{3pt}
\psset{unit=3pt}
\psset{runit=2pt}
\psset{linewidth=0.2}
\begin{pspicture}(0,0)(5,4.5)
\psline(3,-1)(3,.5)
\psline(3,.5)(0,3.5)
\psline(3,.5)(4,1.5)
\psline(2,1.5)(3,2.5)
\psline(1,2.5)(2,3.5)
\end{pspicture}}

\newcommand\treeBAC{
\setlength{\unitlength}{3pt}
\psset{unit=3pt}
\psset{runit=2pt}
\psset{linewidth=0.2}
\begin{pspicture}(0,0)(5,4.5)
\psline(3,-1)(3,.5)
\psline(3,.5)(1,2.5)
\psline(3,.5)(4,1.5)
\psline(2,1.5)(4,3.5)
\psline(3,2.5)(2,3.5)
\end{pspicture}}

\newcommand\treeACA{
\setlength{\unitlength}{3pt}
\psset{unit=3pt}
\psset{runit=2pt}
\psset{linewidth=0.2}
\begin{pspicture}(0,0)(6,3.5)
\psline(3,-1)(3,.5)
\psline(3,.5)(0.5,3)
\psline(3,.5)(5.5,3)
\psline(1.5,2)(2.5,3)
\psline(4.5,2)(3.5,3)
\end{pspicture}}

\newcommand\treeCAB{
\setlength{\unitlength}{3pt}
\psset{unit=3pt}
\psset{runit=2pt}
\psset{linewidth=0.2}
\begin{pspicture}(0,0)(5,4.5)
\psline(2,-1)(2,.5)
\psline(2,.5)(4,2.5)
\psline(2,.5)(1,1.5)
\psline(3,1.5)(1,3.5)
\psline(2,2.5)(3,3.5)
\end{pspicture}}

\newcommand\treeCBA{
\setlength{\unitlength}{3pt}
\psset{unit=3pt}
\psset{runit=2pt}
\psset{linewidth=0.2}
\begin{pspicture}(0,0)(5,4.5)
\psline(2,-1)(2,.5)
\psline(2,.5)(5,3.5)
\psline(2,.5)(1,1.5)
\psline(3,1.5)(2,2.5)
\psline(4,2.5)(3,3.5)
\end{pspicture}}


%
%

%
%
\newcommand{\lgraft}[2]
{\setlength{\unitlength}{4pt}
\psset{unit=5pt}
\psset{runit=4pt}
\psset{linewidth=.1}
\begin{pspicture}(0,0)(4,4)
\psline(2.1,.9)(1.4,1.6)
\put(2.6,0){$#1$}
\put(0.5,2.6){$#2$}
\end{pspicture}}

\newcommand{\rgraft}[2]
{\setlength{\unitlength}{4pt}
\psset{unit=5pt}
\psset{runit=4pt}
\psset{linewidth=.1}
\begin{pspicture}(0,0)(4,4)
\psline(1.5,.9)(2.2,1.6)
\put(.4,0){$#1$}
\put(3,2.6){$#2$}
\end{pspicture}}

%

\begin{document}

\title{Five interpretations of Fa\`a di Bruno's formula}
\author{Alessandra Frabetti (*), Dominique Manchon (**) \\
\address{
(*) Universit\'e de Lyon, Universit\'e Lyon 1, CNRS, \\  
UMR 5208 Institut Camille Jordan, \\ 
B\^atiment du Doyen Jean Braconnier, \\
43, blvd du 11 novembre 1918, F-69622 Villeurbanne Cedex, France. \\
email:{ \tt frabetti@math.univ-lyon1.fr}\\
(**) Universit\'e Blaise Pascal, CNRS, \\  
UMR 6620, Laboratoire de Math\'ematiques, \\ 
BP 80026, \\
F-63171 Aubi\`ere Cedex, France. \\
email:{ \tt manchon@math.univ-bpclermont.fr}}}

\date{\today}
\maketitle

\begin{center}
Dedicated to Jean-Louis Loday
\end{center}
\vskip 8mm

\begin{abstract}
In these lectures we present five interpretations of the Fa\`a di Bruno
formula which computes the $n$-th derivative of the composition of two 
functions of one variable: in terms of groups, Lie algebras and Hopf algebras, 
in combinatorics and within operads.
\end{abstract}
\bigskip 

2010 Mathematics Subject Classification: 16T05, 05E15, 20G15, 22E65, 81R10
\medskip 

Key words: Proalgebraic groups, Hopf algebras, Operads


{\small \tableofcontents} 
\bigskip\bigskip 

\section{Introduction}
In these lectures we present five interpretations of the Fa\`a di Bruno
formula which computes the $n$-th derivative of the composition of two 
functions of one variable.
\begin{enumerate}
\item
This formula tells explicitly how to compute the composition law in the group
of formal diffeomorphisms in one variable. It is therefore related to the 
Lagrange inversion formula, which computes the inverse of a formal 
diffeomorphism in this group. 
\item
In terms of Hopf algebras, it gives the coproduct of the so-called 
Fa\`a di Bruno Hopf algebra, which can be seen as the coordinate ring 
of the previous group. 
It is one of the renormalization Hopf algebras which appeared recently in 
quantum field theory, and being related to the composition of formal series 
it is expected to appear, in its one-variable or several-variables form, 
in any perturbative theory. 
The Fa\`a di Bruno Hopf algebra allows a non-commutative lift, but unlike 
what happens for the group of invertible formal series endowed with the 
product law, this non-commutative Hopf algebra does not represent a 
pro-algebraic group as a functor on non-commutative associative algebras. 
Its nature is then still an open question.
\item
The Lie algebra of the formal diffeomorphisms group is a well-known 
subalgebra of the Witt (and therefore also of the Virasoro) Lie algebra: 
the Fa\`a di Bruno formula is therefore related to the well-known Lie 
bracket on these Lie algebras.  The Fa\`a di Bruno Hopf algebra is obtained from the  Lie algebra of the formal diffeomorphisms group by considering the graded dual of its envoloping algebra (Cartier-Milnor-Moore correspondence).
\item
In combinatorics, the Fa\`a di Bruno Hopf algebra is an important example of
incidence Hopf algebra \cite{Schmitt}. 
It is also a right-sided combinatorial 
Hopf algebra \cite{LR2}, and therefore its associated Lie bracket is the commutator 
of a brace product. 
It would be interesting to extend this brace product to the Witt algebra, 
which is the Lie algebra of vector fields on the circle. 
\item
Finally, an interpretation of the Fa\`a di Bruno formula in operadic terms 
is also possible. We recall in Section \ref{sect:operad} the definition of a prounipotent group (and its corresponding pronilpotent Lie algebra) associated to a large class of linear operad \cite{Chapoton, VanDerLaan}, the group law being related to the operadic 
composition. 
Then, the group of formal diffeomorphisms (hence the Fa\`a di Bruno formula) is the group associated to the operad $Assoc$ governing associative algebras. 
\end{enumerate}
\textbf{Acknowledgements} We thank the referees for their careful reading and their pertinent suggestions which greatly helped us to improve the text.
\section{Fa\`a di Bruno's formula}
\subsection{Francesco Fa\`a di Bruno}

The ``Cavaliere'' Francesco Fa\`a di Bruno was born on March 9, 1825, 
in Alessandria (Italy), and was the youngest of twelve children. 
He died on March 27, 1888, in Torino (Italy). 
He joined the Military Academy of Torino at the age of 15, took part 
in the first Italian independence war in 1848, and then went to Paris 
for two years, during which he studied mathematics with Augustin-Louis 
Cauchy and with the astronomer Urbain Jean Joseph Le Verrier. 
Back in Torino, he left the army in 1853. He gave mathematics lectures 
at Torino University and at the Military Academy, and was appointed 
Professor in 1876. 

Besides his official teaching and research job, he carried out a lot of 
scientific and human activities such as mechanics (he invented, 
among other things, an electric alarm clock for his blind sister), 
music, and, above all, social and philanthropic activities inspired by 
his Christian faith. 
He became a catholic priest in 1876, and was beatified by John-Paul II 
in 1988 for his courageous fight against the hard living conditions 
of female workers in Torino in the 19th century. The interested reader can consult the biography by L. Giacardi \cite{Giacardi} (in Italian).

\subsection{Fa\`a di Bruno's and Lagrange's formulae}

Let us consider the set of smooth functions in one variable, 
endowed with the composition. If $f$ and $g$ are smooth functions of $t$, 
their composition is the smooth function $f\circ g$ defined by 
$(f\circ g)(t)=f\big(g(t)\big)$. 
The Fa\`a di Bruno formula tells how to compute the $n$-th derivative 
of the composite function $f\circ g$ in terms of the derivatives 
of $f$ and $g$. This famous formula was published by F.~Fa\`a di Bruno 
in the dense two-page paper ``{\em Sullo sviluppo delle funzioni}'' 
in 1855 \cite{FaadiBruno}, and says that 
\begin{align}
\label{FdB-formula}
\frac{\d^n}{\d t^n}f\big(g(t)\big) 
&= \sum_{m=1}^n\sum \frac{n!}{k_1! k_2!\cdots k_n!}\ 
f^{(m)}\big(g(t)\big)\ \Big(\frac{g'(t)}{1!}\Big)^{k_1}\ 
\Big(\frac{g''(t)}{2!}\Big)^{k_2}\cdots \Big(\frac{g^{(n)}(t)}{n!}\Big)^{k_n}, 
\end{align}
where the second sum is taken over the non-negative integers $k_1,...,k_n$ 
such that  $k_1+k_2+\cdots +k_n=m$ and $k_1+2k_2+\cdots +nk_n=n$. 
For instance, for $n=3$, there are three possible sequences of such numbers: 
$(k_1,k_2,k_3)=(0,0,1)$, $(1,1,0)$ and $(3,0,0)$. Then 
\begin{align*}
\frac{\d^3}{\d t^3}f\big(g(t)\big) 
& = \frac{3!}{1!} f'\big(g(t)\big)\ \frac{g'''(t)}{3!} 
+ \frac{3!}{1!} f''\big(g(t)\big)\ \frac{g'(t)}{1!} \frac{g''(t)}{2!} 
+ \frac{3!}{3!} f'''\big(g(t)\big)\ \frac{g'(t)^3}{(1!)^3} \\
& = f'\big(g(t)\big)\ g'''(t) + 3 f''\big(g(t)\big)\ g'(t) g''(t) 
+ f'''\big(g(t)\big)\ g'(t)^3. 
\end{align*}

The most direct approach at this stage is to prove  \eqref{FdB-formula} directly by induction on $n$. Although it does not show particular difficulties, it is rather cumbersome. A very elegant alternative proof of \eqref{FdB-formula} can be found in \cite{FLM} (Proposition 8.3.4 therein), which  heuristically goes as follows: we can extend any smooth function $f$ to a smooth functional $\wt f$ from $\mathbb R[[y]]$ to $\mathbb R[[y]]$ by Taylor expansion:
\begin{eqnarray*}
\wt f\left(\sum_{k\ge 0}t_ky^k\right)&=&\wt f\left(t_0+\sum_{k\ge 1}t_ky^k\right)\\
&:=&\sum_{n\ge 0}\frac{\left(\sum_{k\ge 1}t_ky^k\right)^n}{n!}\partial^nf(t_0).
\end{eqnarray*}
Functionals like $T\mapsto \wt f(T)$ can be seen as smooth functions depending on the infinite set of variables $\{t_0,t_1,t_2,\ldots\}$, with $T=t_0+t_1y+t_2y^2+\cdots$. Set now $\partial_k:=\displaystyle\frac{d}{dt_k}$ for short and collect all the powers $\partial_0^n$ in the formal translation operator $e^{y\partial_0}=\sum_{k\ge 0}\frac{y^k}{k!}\partial_0^k$. The crucial properties:
\begin{eqnarray}
\wt{f\circ g}&=&\wt f\circ \wt g,\label{property1}\\
e^{y\partial_0}(\wt f\circ \wt g)(t_0)&=&\wt f\circ(e^{y\partial_0}\wt g)(t_0)=\wt{f\circ g}(t_0+y)\label{property2}
\end{eqnarray}
together yield:
\begin{eqnarray}
e^{y\partial_0}(\wt{f\circ g})(t_0)&=&\wt f\big(\wt g(t_0+y)\big)\notag\\
&=&\wt f\left(g(t_0)+\sum_{k\ge 1}\frac{g^{(k)}(t_0)}{k!}y^k\right)\notag\\
&=&e^{\left(\sum_{k\ge 1}\frac{g^{(k)}(t_0)}{k!}y^k\right)\frac{d}{dt_0}}\wt f\big(g(t_0)\big)\label{gen-fdb}.
\end{eqnarray}
Formula \eqref{FdB-formula} is then obtained by taking $n!$ times the coefficient of $y^n$ in \eqref{gen-fdb} above. Formula \eqref{gen-fdb} is valid in greater generality, replacing $\partial_0$ with any derivation $X$, with $g^{(k)}$ now standing for $X^k(g)$. \\

In order to make the proof above completely rigorous, we have to prove properties \eqref{property1} and \eqref{property2}. A proof of these can be found in \cite{FLM}, based uniquely on formal-variable calculus (see \cite[Sections 2.1, 2.2, 8.1--8.3]{FLM}). One far-reaching application of this formal-variable calculus is the associativity principle for lattice vertex operators (\cite{FLM}, Equation (8.4.32), Theorem 8.4.2, Theorem 8.8.9 and Proposition 8.10.5). We won't go further into this direction, referring the interested reader to \cite{FLM, LL} for an introduction to the theory of vertex operator algebras. See also \cite{Kac} for a different approach to this vast topic.\\

Our interpretation of the formal variable argument alluded to above is the following: for any Taylor expansion functional $F=\wt f$ as above, where $f:\mathbb R\to\mathbb R$ is a smooth function, and for any $S,T\in\mathbb R[[y]]$ we have:
\begin{equation}\label{translation}
F(T+yS)=e^{yS\partial_0}F(T).
\end{equation}
The case $T=t_0\in\mathbb R$ is a simple rephrasing of the definition, and the case for general $T$ follows easily. Taking the linear part with respect to $S$ in \eqref{translation} successively for $S=1,y,y^2,\ldots$, we get an explicit expression for the differential $DF$ of $F$ at $T$ in terms of $DF(T)[\partial_0]$. Namely, for any $k\ge 1$:
\begin{equation}\label{diffk}
DF(T)[\partial_k]=y^k DF(T)[\partial_0].
\end{equation}
We can identify $\mathbb R[[y]]$ with its tangent space at each point $T$, via $\partial_k\simeq y^k$. We have then a coordinate-free formulation of \eqref{diffk}:
\begin{equation}\label{diffkbis}
DF(T)[S]=S.DF(T)[\partial_0]
\end{equation}
for any $S\in\mathbb R[[y]]$. Conversely, any smooth functional $F:\mathbb R[[y]]\to\mathbb R[[y]]$ which satisfies \eqref{diffkbis} also satisfies the following identities involving higher-order differentials:
\begin{equation}\label{diffkter}
D^rF(T)[S_1,\ldots, S_r]=S_1\cdots S_r.D^rF(T)[\partial_{0},\ldots,\partial_{0}].
\end{equation}
Writing down the Taylor formula for the one-variable function $h\mapsto F(T+hyS)$ we see that \eqref{diffk} implies \eqref{translation}.  Hence any $F$ verifying \eqref{diffk} can be written as $F=\wt f$, with $\wt f(t_0)=F(t_0)$ for any $t_0\in\mathbb R$. Now consider two functionals $F=\wt f$ and $G=\wt g$. We claim that $F\circ G$ verifies \eqref{diffk}: indeed, 
\begin{eqnarray*}
D(F\circ G)(T)(\partial_k)&=&DF\big(G(T)\big)\circ DG(T)[\partial_k]\\
&=&DF\big(G(T)\big)\circ y^kDG(T)[\partial_0]\\
&=&y^kDF\big(G(T)\big)\circ DG(T)[\partial_0]\\
&=&y^kD(F\circ G)(T)[\partial_0].
\end{eqnarray*}
Hence there exists $h\in C^\infty(\mathbb R)$ such that $F\circ G=\wt h$, and evaluating it at $t_0\in\mathbb R$ yields $h=f\circ g$, which proves \eqref{property1}. Property \eqref{property2} follows immediately:
\begin{eqnarray*}
\wt f\circ e^{y\partial_0}\wt g(t_0)&=&\wt f\circ e^{\partial_1}\wt g(t_0)\\
&=&\wt f\circ\wt g(t_0+y)\\
&=&e^{\partial_1}(\wt f\circ\wt g)(t_0)\\
&=&e^{y\partial_0}(\wt f\circ\wt g)(t_0).
\end{eqnarray*}
Surprisingly enough, F. Fa\`a di Bruno wrote this formula almost 
a century after a similar one was established: the equally famous 
Lagrange inversion formula of 1770 \cite{Lagrange}, which computes the 
compositional inverse of a smooth function. 
If $u=f(t)$ and $t=g(u)$ are two smooth functions, inverse to each other, with $f(0)=g(0)=0$ and $f'(0)\not =0$,
then Lagrange's inversion formula says that, for $u$ in a neighborhood of zero, 
\begin{align}
\label{Lagrange-formula}
g(u)= \sum_{n=1}^N \frac{1}{n!}\ 
\frac{\d^{n-1}}{\d t^{n-1}}\left(\frac{f(t)}{t}\right)^{-n}\Big|_{t=0}\ u^n+O(u^{N+1})
\end{align}
for any $N\ge 0$. For instance, for $u=f(t)=t\,e^t$, this formula allows us to compute 
$t$ as 
\begin{align}\label{lambert}
t=g(u)& =\sum_{n=1}^\infty\ (-1)^{n-1}\ \frac{n^{n-1}}{n!}\ u^n,
\end{align} 
because
\begin{align*}
\frac{\d^{n-1}}{\d t^{n-1}}\left(\frac{f(t)}{t}\right)^{-n}\Big|_{t=0} 
= \frac{\d^{n-1}(e^{-nt})}{\d t^{n-1}}\Big|_{t=0} = (-n)^{n-1}. 
\end{align*}
The series \eqref{lambert} defines Lambert's W-function on its domain of convergence, with radius of 
convergence equal to $1/e$ (see e.g. \cite{T}). Similarly to the logarithm, this function 
admits a multivalued analytic extension to the whole complex plane 
with $1/e$ removed. 

In the two-page paper \cite{FaadiBruno}, Fa\`a di Bruno also expresses 
$\frac{\d^n}{\d t^n}f\big(g(t)\big)$ as the determinant of the following 
$n\times n$ matrix:
\begin{align*}
\left(\begin{array}{cccccccc}
f g' & {\scriptsize (n-1)} f g'' &\binom{n-1}{2} f g''' & 
\binom{n-1}{3} f g^{(4)} & \cdots & 
\binom{n-1}{n-3} f g^{(n-2)} & (n-1) f g^{(n-1)} & f g^{(n)} 
\\ 
-1 & f g' & (n-2) f g'' & \binom{n-2}{2} f g''' & \cdots & 
\binom{n-2}{n-4} f g^{(n-3)} & (n-2) f g^{(n-2)} & f g^{(n-1)} 
\\ 
0 & -1 & f g' & (n-3) f g'' & \cdots & 
\binom{n-3}{n-5} f g^{(n-4)} & (n-3) f g^{(n-3)} & f g^{(n-2)} 
\\ 
\vdots & \vdots & \vdots & \vdots & \cdots & \vdots & \vdots & \vdots 
\\ 
0 & 0 & 0 & 0 & \cdots & -1 & f g' & f g''
\\ 
0 & 0 & 0 & 0 & \cdots & 0 & -1 & f g' 
\end{array}\right)
\end{align*}
Unlike the Fa\`a di Bruno formula (\ref{FdB-formula}), this beautiful 
matrix formula seemingly did not get any echo among mathematicians, 
nor any further development.
\subsection{Some points of history}
It has been pointed out that Formula \eqref{FdB-formula} was certainly known 
before Fa\`a di Bruno's paper \cite{FaadiBruno}: quoting W.~P.~Johnson 
\cite{Johnson}, several formulas of the same type were established in the 
middle of the 19th century, by Hoppe (1845), Meyer (1847), J.F.C. Tiburce 
Abadie \textsl{alias} T.A. (1850) and Scott (1861). 
Moreover, as highlighted by A.~D.~D.~Craik \cite{Craik}, formula 
\eqref{FdB-formula} appears nearly as such, as early as 1800, in the 
remarkable book of Louis Fran\c cois Antoine Arbogast (1759-1803), professor 
of Mathematics in Strasbourg \cite[``Remarques'', pp. 43-44]{Arbogast}. 
This work on differential calculus for composite functions inspired in turn 
several French and British mathematicians of the first half of the 19th 
century, e.g. \cite{Knight, Lacroix}. To sum up, although we won't change 
a well-established terminology, let us cite the following sentence from 
the conclusion of Craik's historical survey \cite{Craik}:
\begin{quote}
\textsl{``Fa\`a di Bruno's formula'' was first stated by Arbogast in 1800, 
and it might as appropriately be named after one of the ten or more authors 
who obtained versions of it before Fa\`a di Bruno. Only the determinantal 
formulation of it ought to be called ``Fa\`a di Bruno's formula''}.
\end{quote}

In the middle of the 20th century the Fa\`a di Bruno formula is enriched 
with a combinatorial interpretation. In 1946, J. Riordan recognizes the 
Bell polynomials \cite{Bell}:
$$
\BB_{n,m}\big(x_1,x_2,...,x_l) = \sum_{{k_1+k_2+\cdots+ k_l=m,\atop k_1+2k_2+\cdots +lk_l=n}} \frac{n!}{k_1!\cdots k_l!}\ 
\Big(\frac{x_{1}}{1!}\Big)^{k_1}\cdots \Big(\frac{x_{l}}{l!}\Big)^{k_l}, 
\qquad l=n-m+1
$$ 
and rewrites Fa\`a di Bruno's formula as \cite{Riordan}:
\begin{equation}\label{eq:riordan}
\frac{\d^n}{\d t^n}f\big(g(t)\big) 
= \sum_{m=1}^n \ f^{(m)}\big(g(t)\big)\
\BB_{n,m}\Big(g'(t),g''(t),\ldots,g^{(n-m+1)}(t)\Big). 
\end{equation}
In 1965, R.~Frucht and G-C.~Rota \cite{FruchtRota} propose a purely 
combinatorial formulation of Fa\`a di Bruno's formula: 
$$
\frac{\d^n}{\d t^n}f\big(g(t)\big) 
= \sum_{m=1}^n \sum \ f^{(m)}\big(g(t)\big)\ 
g'(t)^{k_1} g''(t)^{k_2}\cdots g^{(n)}(t)^{k_n}, 
$$
where the second sum is taken over the partitions of $\{1,...,n\}$ 
with $m$ blocks having $k_i$ blocks with $i$ elements. 
For instance, for $n=3$, there are 5 partitions,  
$\{1,2,3\}$, $\{1\}\{2,3\}$, $\{2\}\{1,3\}$, $\{3\}\{1,2\}$ and 
$\{1\}\{2\}\{3\}$: 
\begin{itemize}
\item
$\{1,2,3\}$ has a single block with 3 elements, therefore to this partition 
corresponds the sequence $(k_1,k_2,k_3)= (0,0,1)$; 
\item
$\{1\}\{2,3\}$, $\{2\}\{1,3\}$ and $\{3\}\{1,2\}$ have two blocks, one with 
1 element and the other one with 2, therefore there are three sequences 
$(k_1,k_2,k_3)=(1,1,0)$; 
\item
$\{1\}\{2\}\{3\}$ has three blocks with 1 element each, therefore there 
is a corresponding sequence $(k_1,k_2,k_3)= (3,0,0)$. 
\end{itemize}
The number of resulting sequences matches with the coefficient in
Fa\`a di Bruno's formula. The proof of the Frucht-Rota formula starting from Riordan's formula resides in the following combinatorial interpretation of the Bell polynomials: the coefficient of $x_1^{j_1}\cdots x_l^{j_l}$ in $B_{n,m}(x_1,\ldots x_l)$ (with $l=n-m+1$) is equal to the number of partitions of a set of cardinality $n$ with $j_r$ blocks of size $r$, $r\in\{1,\ldots ,l\}$ (and therefore $m$ blocks altogether).

\section{Group interpretation}
\subsection{Formal diffeomorphisms}\label{fd}

The composition of functions of one variable is an associative operation, 
for which the identity $\id(t)=t$ is the unit. 
Therefore the set of smooth functions which have a compositional inverse 
forms a group, called the group of smooth diffeomorphisms on the real line, 
and denoted by $\Diff(\R)$. Groups of diffeomorphisms on a manifold are 
infinite dimensional groups, not locally compact, with many connected 
components classified by the mapping class group. Even for the simplest ones, 
like $\Diff(\R)$ or $\Diff(\S^1)$, very little is known.

Up to composition by a translation, a diffeomorphism on the real line 
fixes the origin. Up to a scalar factor, one can also assume that a 
diffeomorphism is tangent to the identity at the fixed point. 
In other words, the group of diffeomorphisms of the real line is the 
semi-direct product of the group of diffeomorphisms tangent to the identity 
by the group of affine transformations.

Thinking in terms of Taylor expansions, we will rather consider the group of 
\textsl{formal} diffeomorphisms tangent to the identity:
\begin{equation}
\Gdif(\K)=\Big\{ f(t)= \sum f_n t^n\in \K[[t]],\ f_0=0\hbox{ and }f_1=1 \Big\},
\end{equation}
whose definition makes sense for any field $\K$ of characteristic zero. 
Fa\`a di Bruno's formula then expresses the $n$-th coefficient of the series 
$f\circ g$ in terms of the coefficients of $f$ and $g$:
\begin{equation}\label{eq:comp}
(f\circ g)_n=\frac{1}{n!}(f\circ g)^{(n)}(0)
= \sum_{m=1}^n\sum \ \dfrac{m!}{k_1! k_2!\cdots k_n!}\quad 
f_m\, g_1^{k_1}\cdots g_n^{k_n},
\end{equation}
where the second sum runs over $n$-uples $(k_1,\ldots,k_n)$ of non-negative 
integers such that $k_1+k_2+\cdots +k_n=m$ and $k_1+2k_2+\cdots +nk_n=n$. 
In terms of Bell's polynomial, this coefficient becomes:
\begin{equation}\label{eq:comp-Bell}
(f\circ g)_n=
\sum_{m=1}^n\dfrac{m!}{n!}\quad f_m\, B_{n,m}(g_1,2!g_2,...,(n-m+1)! g_{n-m+1}). 
\end{equation}
Formula \eqref{eq:comp} can be directly checked by computing the coefficient 
of $t^n$ in the composition of the two formal series:
\begin{equation*}
(f\circ g)(t)=\sum_{m\ge 1}f_m\Big(\sum_{k\ge 1}g_kt^k\Big)^m,
\end{equation*}
expanding each power by means of the multinomial formula. The Fa\`a di Bruno 
formula \eqref{FdB-formula} is then directly derived from \eqref{eq:comp}.
\bigskip

This group admits finite dimensional representations, none of which is faithful.
Its smallest faithful representation is infinite dimensional, and presents 
$\Gdif(\K)$ as the group of infinite matrices of the form 
$$
M(g)=\left(\begin{array}{cccccc}
g_1 & g_2 & g_3 & g_4 & g_5 &\cdots \\ 
0 & g_1 & 2g_2 & g_2^2+2g_3 & 2g_4+2g_2g_3 &\cdots \\ 
0 & 0 & g_1 & 3g_2 & 3g_3+3g_2^2&\cdots \\ 
0 & 0 & 0 & g_1 & 4g_2&\cdots \\
0 & 0 & 0 & 0 & g_1&\cdots \\
\vdots & & &\cdots & & \ddots 
\end{array}\right),
$$
with $g_1=1$ here. In view of formula \eqref{eq:comp-Bell}, the coefficients $(M(g)_{ij})_{i\le j}$ are defined as:
$$M(g)_{ij}=\frac{j!}{i!}B_{i,j}\big(g_1, 2!g_2,\ldots,(i-j+1)!g_{i-j+1}\big).$$
\section{Hopf algebra interpretation}
\subsection{Hopf algebras}
\label{ssec:Hopf}
We give here a brief account of the subject. A standard reference on Hopf algebras is Sweedler's monograph \cite{Sweedler}.
An \textsl{associative unital algebra}, which we will abbreviate as \textsl{algebra} (in the symmetric monoidal category 
of vector spaces over a field $\K$, the morphisms being linear maps), is a 
$\K$-vector space $A$ together with:
\medskip
\begin{itemize}
\item 
a multiplication 
$m:A\otimes A\longrightarrow A, (a,b)\mapsto m(a,b)=a\cdot b$ such that 
$m (m\otimes \id)= m (\id\otimes m)$, i.e. 
$$(a\cdot b)\cdot c = a\cdot (b\cdot c) \qquad\mbox{for any $a,b\in A$};$$
\item
a unit $i:\K\hookrightarrow A$ such that $i(ts)=i(t)\cdot i(s)$ 
for any $s,t\in \K$ and such that, if we set $1_A=i(1_{\K})$, we have 
$a\cdot 1_A = a = 1_A\cdot a$ for any $a\in A$.
\end{itemize}
\bigskip  

\noindent
A (coassociative counital) \textsl{coalgebra} is a $\K$-vector space $C$ 
together with:
\medskip
\begin{itemize}
\item 
a comultiplication (or coproduct)
$\D:C\longrightarrow C\otimes C, c\mapsto \D(c)=\sum c_{(1)}\otimes c_{(2)}$ 
such that $(\D\otimes\id)\D =(\id\otimes \D)\D$.
\item
a counit $\varepsilon:C\longrightarrow \K$ such that $(\varepsilon\otimes I)\Delta=(I\otimes \varepsilon)\Delta=\mop{Id}_C$.
\end{itemize}
\medskip

For example, if $C$ is any coalgebra with 
comultiplication $\D$ and counit $\varepsilon$, its linear dual $A=C^*$ is an
algebra with product $m=\D^*$ and unit $i=\varepsilon^*$. Similarly, if $A$ is a \textsl{finite-dimensional}  algebra with product $m$ 
and unit $i$, then $C=A^*=\Hom_{\K}(A,\K)$ is a coalgebra with 
comultiplication $\D=m^*: A^*\longrightarrow A^*\otimes A^* \cong (A\otimes A)^*$ 
given by $\D(f)(a\otimes b) = f(a\cdot b)$, and counit 
$\varepsilon=i^*:A^*\longrightarrow \K$ given by 
$\varepsilon (f) = f(1_A)$.
\medskip

\noindent
A Hopf algebra is a $\K$-vector space $H$ which is both an algebra 
and a coalgebra, such that:
\begin{itemize}
\item 
The comultiplication $\D$ and the counit $\varepsilon$ are algebra morphisms, 
i.e:
\begin{align*}
& \D(a\cdot b)=\D(a)\cdot \D(b), \qquad \D(1)=1\otimes 1 \\ 
& \varepsilon(a\cdot b)=\varepsilon(a)\cdot \varepsilon(b),\qquad 
\varepsilon(1)=1, 
\end{align*}
where the product on $H\otimes H$ is given by 
$(a_1\otimes a_2)\cdot (b_1\otimes b_2)=(a_1\cdot b_1) \otimes (a_2\cdot b_2)$
for any $a_1,a_2,b_1,b_2\in H$.
\item
There is a linear map $S:H\longrightarrow H$, called the \textsl{antipode}, 
such that $m(S\otimes\id)\D=i\varepsilon=m(\id\otimes S)\D$.
\item 
The antipode is both an antimorphism of algebras and an antimorphism 
of coalgebras, i.e., for any $a,b\in H$, we have 
\begin{align*}
& S(a\cdot b)=S(b)\cdot S(a), \qquad S(1_H)=1_H, \\ 
& \D(S(a)) = (S\otimes S)\D^{op}(a), \qquad \varepsilon(S(a))= \varepsilon(a), 
\end{align*}
where $\D^{op}=\tau\circ \D$, with $\tau(u\otimes v)=v\otimes u$.
\end{itemize}
The dual $A^*$ of any finite-dimensional Hopf algebra 
$(A,m,i,\D,\varepsilon,S)$ is also a Hopf algebra, with mutiplication $\D^*$, 
comultiplication $m^*$, unit $\varepsilon^*$, counit $i^*$ and antipode $S^*$. 
Let us give some classical examples of Hopf algebras:
\begin{enumerate}
\item 
The universal enveloping algebra $U(\gLie)$ of a Lie algebra $\gLie$. 
By definition, $U(\gLie)=T(\gLie)/J$ is the quotient of the free associative 
algebra on the vector space $\gLie$ by the two-sided ideal $J$ generated by 
$\{x\otimes y-y\otimes x-[x,y],\ x,y\in\gLie\}$. 
The comultiplication, the counit and the antipode are the algebra morphisms 
(antimorphism for the antipode) given on the generators $x\in\gLie$ by 
\begin{align*}
\D(x) = x\otimes 1+1\otimes x, \qquad \varepsilon(x)=0, \qquad 
S(x)=-x. 
\end{align*}
The elements of $\gLie$ are said to be \textsl{primitive} in the Hopf algebra, 
because of the special form the comultiplication$\D$ takes on them.  
This Hopf algebra is \textsl{cocommutative}, i.e.~$\D(u)=\D^{op}(u)$ 
for any $u\in U(\gLie)$. 
\item
The group algebra $\K G$ of a finite group $G$. By definition, 
$\K G$ is the vector space freely generated by the elements of $G$, with multiplication bilinearly induced from the group law.
The comultiplication, the counit and the antipode are the algebra morphisms 
(antimorphism for the antipode) given on the generators $x\in G$ by 
\begin{align*}
\D(x) = x\otimes x, \qquad \varepsilon(x)=\delta_{x,1_G}, \qquad 
S(x)=x^{-1}. 
\end{align*}
The elements of $G$ are said to be \textsl{group-like} in the Hopf algebra, 
because of the special form the $\D$ takes on them.  
This Hopf algebra is also cocommutative.
\item
The algebra of representative functions $R[G]$ of a group $G$. 
By definition, $R[G]$ is the algebra generated by the matrix elements of 
finite-dimensional representations of $G$. The remarkable property 
$R[G]\otimes R[G] \cong \R[G\times G]$ (see \cite[Lemma 3.1.1]{Cartier}) allows us to define the coproduct 
of a function (of one variable) as being a function of two variables. 
Explicitly, the coproduct, the counit and the antipode of any representative 
function $f$ on $G$ are defined as follows, for any $x,y\in G$: 
\begin{align*}
\D(f)(x,y) = f(xy), \qquad \varepsilon(f)=f(1_G), \qquad 
Sf(x)=f(x^{-1}). 
\end{align*}
This Hopf algebra is commutative and is also called the 
\textsl{coordinate Hopf algebra} of the group $G$. 

If $G$ is a finite group, any function on $G$ is representative, and the 
Hopf algebra $R[G]$ is dual to the Hopf algebra $\K G$. If $G$ is a compact Lie group over $k=\mathbb R$, "almost all" functions on $G$ are representative, in the sense that there is a nondegenerate pairing $\langle\cdot,\cdot\rangle$ between $R[G]$ and the enveloping algebra $U(\frak g)$, where $\frak g$ is the Lie algebra of $G$, such that $\langle \delta(f),u\otimes v\rangle=\langle f,uv\rangle$ and $\langle f\otimes g,\Delta u\rangle=\langle fg,u\rangle$ for any $f,g\in R[G]$ and any $u,v\in U(\frak g)$. The pairing is given by $\langle f,u\rangle:=u.f(e)$, where $u$ acts on $f$ as a (left-invariant) differential operator, and where $e$ is the unit of the group \cite[3.3 and 3.4]{Cartier}.
\end{enumerate}

\subsection{Affine group schemes and proalgebraic groups}

Let $H$ be a commutative Hopf algebra and $A$ a commutative unital algebra 
over some field $\K$. Then, the vector space ${\cal{L}}(H,A)$ of $\K$-linear 
maps from $H$ to $A$ inherits a canonical unital algebra structure. The multiplication is 
given by the \textsl{convolution product}, defined, for any two 
$\alpha,\beta\in {\cal{L}}(H,A)$ as 
$$\alpha\ast\beta:=m_A\circ (\alpha\otimes\beta)\circ \D_H.$$ 
The convolution unit is the linear map $e_A:=i_A\circ \varepsilon_H$. 
The vector space ${\cal{L}}(H,A)$ is often huge 
and not easy to handle. However it contains a subgroup which is, roughly 
speaking, equivalent to $H$. In fact, let $\CAlg$ be the category of commutative and unital associative 
$\K$-algebras and consider the subset 
$$G_H(A):=\mop{Hom}_{\smop{CAlg}}(H,A)$$ 
of \textsl{$A$-valued characters of $H$}, i.e.~unital algebra morphisms 
from $H$ to $A$. It can easily be checked that the convolution of two 
characters is still a character and that the convolution inverse of a 
character is still a character, because $H$ and $A$ are commutative algebras. 
The unit $e_A=i_A\circ \varepsilon_H$ of ${\cal{L}}(H,A)$ is also 
a character, and the inverse is given by $\varphi\mapsto\varphi\circ S$ where $S$ is the antipode. This construction is functorial, that is, for any unital commutative 
algebras $A,B$ and for any commutative unital algebra morphism $f:A\to B$, 
there is a group morphism $G_H(f):G_H(A)\to G_H(B)$.\\

An \textsl{affine group scheme} \cite{Waterhouse} is a functor from the category of commutative algebras to the 
category of groups, representable by a commutative Hopf algebra $H$.\\

The group $G_H(A)$ has the particular property that $H$ can then be regarded 
as an algebra of $A$-valued functions on the group, because 
any element $h\in H$ can be seen as a function acting on $\alpha\in G_H(A)$ 
as $h(\alpha):=\alpha(h)$, and the fact that $\alpha$ is an algebra morphism, 
that is $\alpha(h\cdot h')=\alpha(h)\cdot\alpha(h')$ for any $h,h'\in H$, 
guarantees that we recover the usual definition of the product of functions, 
that is, $(h\cdot h')(\alpha) = h(\alpha)\cdot h'(\alpha)$. Moreover, the 
coproduct on $H$ then coincides with the coproduct on functions induced by 
the group law in $G_H(A)$, that is, $\D(h)(\alpha,\beta)=h(\alpha\ast\beta)$, 
because of the definition of the convolution product in $G_H(A)$. 
Therefore, any group $G_H(A)$ behaves like a compact Lie group, and $H$ like 
the algebra of its representative functions.\\

An \textsl{algebraic group} is by definition a functor $G$ from the category 
of commutative algebras to the category of groups, having the property that 
there exists a commutative and finitely generated Hopf algebra $H$ such that 
the group $G(A)$ is isomorphic to $G_H(A)$ for any commutative unital algebra 
$A$. In other words, an algebraic group is a representable functor from the 
category of commutative algebras to the category of groups, such that 
the Hopf algebra which represents it is finitely generated. In this case, 
the algebra $H$ is the algebra of $A$-valued polynomial functions on 
the group $G_H(A)$, and its generators can be seen as coordinate functions 
on the group. Classical examples of algebraic groups are the matrix groups 
$GL_n$, $SL_n$, $SO_n$ and $SU_n$.\\

A \textsl{pro-algebraic group} is a projective limit of algebraic groups. 
It can be viewed as a representable functor from the category of commutative 
algebras to the category of groups, such that the Hopf algebra which 
represents it is an inductive limit of finitely generated Hopf algebras. 
Hence the group has infinitely many coordinate functions. As any commutative Hopf algebra is an inductive limit of finitely generated hopf algebras, any affine group scheme is pro-algebraic \cite[Theorem 3.3]{Waterhouse}.\\

Finally, a pro-algebraic group $G_H$ is called \textsl{prounipotent} (see e.g. \cite[Paragraph 3.9]{Cartier}) if the 
Hopf algebra $H$ is \textsl{connected}, which means that there exists an 
increasing filtration of vector spaces 
$$H^0\subset\cdots\subset H^n\subset\cdots$$ 
with $H=\bigcup_{n\ge 0}H^n$, such that the vector space $H^0$ is 
one-dimensional, i.e.~$H^0=\K\cdot 1_H$, and such that the multiplication and 
the coproduct are of filtration degree zero, i.e.~for any $p,q\ge 0$ we have
\begin{equation*}
H^p\cdot H^q\subset H^{p+q}\qquad\mbox{and}\qquad 
\Delta(H^n)\subset\sum_{p+q=n}H^p\otimes H^q.
\end{equation*}
It is well known that the antipode of a connected Hopf algebra ``comes for 
free'' by recursive relations (in other words, any connected filtered 
bialgebra is a Hopf algebra), and is also of filtration degree zero, i.e.~$S(H^n)\subset H^n$ for any $n\ge 0$ \cite{FGV, Manchon08}.

\subsubsection{The Hopf algebra of symmetric functions}

A basic example of a pro-algebraic group is the group $\Ginv$ 
of invertible series in one variable with pointwise multiplication: for any 
commutative unital algebra $A$, the group $\Ginv(A)$ is the set of 
formal series 
$$f(t)=1_A+\sum_{n\ge 1}f_nt^n, \qquad f_n\in A,$$
endowed with the product
$$(fg)(t)=f(t)g(t)=1_A+\sum_{n\ge 1}\left(\sum_{k=0}^nf_kg_{n-k}\right)t^n,$$
with the convention $f_0=g_0=1_A$. Its representing Hopf algebra is 
the polynomial ring $\Hinv=\K[x_1,x_2,x_3\ldots]$ on infinitely many variables 
$x_n$, $n\geq 1$, which play the role of $A$-valued coordinate functions 
on each group $\Ginv(A)$, that is, 
$$x_n:\Ginv(A)\to A, f\mapsto x_n(f):=f_n.$$
The coproduct and the counit on $\Hinv$ are defined, for any 
$f,g\in \Ginv(A)$, as 
$$\Dinv(x_n)(f,g)=x_n(fg) \qquad\mbox{and}\qquad \varepsilon(x_n)=x_n(1),$$ 
where the constant series $1(t)=1_A$ is the unit in $\Ginv(A)$, 
and can be easily computed as 
$$\Dinv(x_n)=\sum_{p=0}^n x_p\otimes x_{n-p} \qquad\mbox{and}\qquad 
\varepsilon(x_n)=\delta_{n,0},$$ 
where we assume that $x_0=1$. 
The resulting bialgebra is graded, if we set the degree of a monomial 
$x_{n_1}^{d_1}\cdots x_{n_k}^{d_k}\in \Hinv$ to be $\sum_{i=1}^k n_i d_i$, that is, 
if we require the generators $x_n$ to be graded, with degree $|x_n|=n$. 
In degree zero there are only scalars, therefore $\Hinv$ is connected and the 
antipode is automatically defined by the recursive formula 
$$S(x_n)=-\sum_{p=0}^{n-1} S(x_p)x_{n-p}.$$ 
Hence $\Ginv$ is a pronilpotent group. The Hopf algebra $\Hinv$ is both commutative and cocommutative, and coincides 
with the well-known \textsl{Hopf algebra of symmetric functions}. 

\subsection{The Fa\`a di Bruno Hopf algebra}
\label{ssect:FdB}

Another key example of a pro-algebraic group is the group $\Gdif$ of 
formal diffeomorphisms tangent to the identity, and it is also prounipotent. 
Indeed, it is a functor: for any commutative unital algebra $A$, the group 
$\Gdif(A)$ is the set of formal series with coefficients in $A$, of the form 
$$f(t)=t + \sum_{n\geq 2} f_n\ t^n\in tA[[t]],$$ 
equipped with the usual composition law. It is the projective limit of the 
algebraic groups ${\Gdif}^{(n)}(A)$, for $n\geq 1$, which are the image of 
$\Gdif(A)$ in the quotient $tA[[t]]/t^{n+1}A[[t]]$. Furthermore, the functor 
$\Gdif$ is representable: its representing Hopf algebra $H$ is the Hopf algebra 
of $A$-valued polynomial functions on $\Gdif(A)$, freely generated as a commutatve algebra by the functions
$$x_n:\Gdif(A)\to A, f\mapsto x_n(f):= f_{n+1}, \qquad\mbox{for $n\geq 1$}.$$
The coproduct is induced by 
the composition of series: 
$$\D(x_n)(f\otimes g)= x_n(f\circ g)\quad\mbox{for any $f,g\in\Gdif(A)$},$$
counit is induced by the unit $\id(t)=t$ of the group: 
$$\varepsilon(x_n)=x_n(\id),$$ 
and antipode is induced by the composition-inversion of series: 
\begin{equation}\label{lagrange2}
S(x_n)=x_n(f^{-1})\quad\mbox{for any $f\in\Gdif(A)$}.
\end{equation}
The Lagrange inversion formula \eqref{Lagrange-formula} gives an explicit formula for $S(x_n)$. 
Using eq.~\eqref{eq:comp} we can explicitly compute the coproduct. Up to a 
flip between left and right terms in the tensor product $H\otimes H$ (caused by the fact tht we write composition of functions from right to left), this 
Hopf algebra coincides with the Fa\`a di Bruno Hopf algebra 
$$\HFdB=\K[x_1,x_2,x_3,...],$$
with coproduct and counit given, on the generators, by 
\begin{align}
\nonumber
\DFdB (x_n)&=\sum_{m=0}^n \frac{(m+1)!}{(n+1)!} 
\BB_{n+1,m+1}(1,2!x_1,3!x_2,...,(n-m+1)!x_{n-m})\otimes x_m \\ 
&=\ \sum_{m=0}^n 
\left(\sum_{{\scriptstyle k_0+k_1+\cdots +k_n=m+1, \atop \scriptstyle k_1+2k_2+\cdots +nk_n=n-m}} 
\dfrac{(m+1)!}{k_0! k_1!\cdots k_n!}\ x_1^{k_1}\cdots x_n^{k_n}\right) 
\otimes x_m,\label{doubilet} \\ 
\varepsilon(x_n) &= \delta_{n,0},\notag
\end{align}
where we use the convention $x_0=1$, and where $\BB$ denotes the Bell 
polynomial. The antipode can be found recursively, because $\HFdB$ is a graded 
and connected bialgebra, where the generators have degree $|x_n|=n$, 
for $n\ge 0$. The left-right flip of factors means that $\HFdB$ is in fact 
the representative Hopf algebra of the opposite pro-algebraic group 
$(\Gdif)^{op}$, with \textsl{opposite composition law} 
$f\circ^{op} g = g\circ f$. For any commutative algebra $A$, the group 
$\Gdif(A)$ can then be seen as the group of 
$A$-valued characters of $\HFdB$ with reversed convolution product: any formal series $f\in\Gdif(A)$ 
is identified with a linear map $f:\HFdB\to A$ satisfying the properties 
$$
f(1)=1_A \qquad\hbox{and}\qquad f(xy)=f(x)f(y)
\quad\hbox{for any $x,y\in\HFdB$}.
$$ 
The composition of two series $f,g\in\Gdif(A)$, then, coincides with their 
opposite convolution product, namely
$$f\circ g=m_A\circ (g\otimes f)\circ \DFdB.$$ 

Detailed explanations can also be found in \cite[Section II]{FGV2}. The Fa\`a di Bruno Hopf algebra first appeared under this name in 1974 
(P.~Doubilet, \cite{Doubilet}), and in 1979 (S.A.~Joni and G.-C.~Rota, 
\cite{JoniRota}), as an example of a Hopf algebra structure obtained from the lattice 
of set partitions. We say more about such Hopf algebras in section 
\ref{Section6}. It is also worth mentioning that the Fa\`a di Bruno Hopf algebra is a Hopf subalgebra of the Connes-Moscovici Hopf algebra, which is neither commutative nor cocommutative (see \cite[Paragraph III]{CM} for a detailed account, see also \cite{Menous}). The Fa\`a di Bruno Hopf algebra also plays a role in applied mathematics, e.g. in control theory \cite{GD}.

\subsection{Two more examples}
\subsubsection{The Hopf algebra of rooted trees}\cite{ConnesKreimer,F,Manchon08}
\label{sssect:bck}
A \textsl{rooted tree} is a class of oriented (non planar) graphs with a finite number 
of vertices, among which one is distinguished and called the \textsl{root}, 
such that any vertex admits exactly one incoming edge, except the root which 
has no incoming edges. Any tree yields a poset structure on the set of its vertices:  two vertices $x$ and $y$ verify $x \le y$ if and only if there is a path from a root to $y$ passing through $x$. Two graphs are equivalent (hence define the same rooted tree) if and only if the two underlying posets are isomorphic. Here is the list of rooted trees up to five vertices:
\begin{equation*}
\racine \hskip 5mm \arbrea \hskip 5mm  \arbreba\  \arbrebb \hskip 5mm  
\arbreca\  \arbrecb\  \arbrecc\ \arbrecd \hskip 5mm  \arbreda\  \arbredb\  
\arbredc\  \arbredd\  \arbrede\  \arbredf\  \arbredz\  \arbredg\ \arbredh
\end{equation*}
A \textsl{rooted forest} is a finite collection of rooted trees. 
The empty set is the forest with containing no trees, and is denoted by $\un$. 
The \textsl{grafting operator} $B_+$ takes any forest and returns the
tree obtained by grafting all components onto a common root. In particular,
$B_+(\un)=\racine$. 
\medskip 

Let $\Cal T$ denote the set nonempty rooted trees and let $\HRT=\K[\Cal T]$ 
be the free commutative and unital algebra generated by the elements of $\Cal T$. 
We identify a product of trees with the forest consisting of these trees. 
Therefore the vector space underlying $\HRT$ is the linear span of rooted 
forests. This algebra is a graded and connected Hopf algebra, called the 
\textsl{Hopf algebra of rooted trees}, with the following structure. 
The grading is given by the number of vertices of trees. 
The coproduct on a rooted forest $u$ (i.e. a product of rooted trees) is 
described as follows: the set $\Cal V(u)$ of vertices of a forest $u$ is 
endowed with the partial order defined by $x \le y$ if and only if there is a 
path from a root to $y$ passing through $x$. Any subset $W$ of the set of 
vertices $\Cal V(u)$ of $u$ defines a {\sl subforest\/} $u\restr{W}$ of $u$ 
in an obvious manner, i.e. by keeping the edges of $u$ which link two elements 
of $W$. The poset structure is given by restriction of the partial order to $W$, and the minimal elements are the roots of the subforest. The coproduct is then defined by:
\begin{equation}
\label{coprod}
\DRT(u)= \sum_{V \amalg W=\Cal V(u) \atop W<V} u\restr{V}\otimes u\restr{W}.
\end{equation}
Here the notation $W<V$ means that $y<x$ for any vertex $x$ in $V$ and any
vertex $y$ in $W$ such that $x$ and $y$ are comparable. Note that both $\emptyset < V$ and $V<\emptyset$. Such a couple $(V,W)$
is also called an \textsl{admissible cut}, with \textsl{crown} (or pruning) 
$u\restr{V}$ and \textsl{trunk} $u\restr{W}$. We have for example:
 \allowdisplaybreaks{
\begin{eqnarray*}
\DRT\big(\arbrea\big) &=& 
   \arbrea \otimes \un + \un \otimes \arbrea + \racine \otimes \racine \\
\DRT\big(\! \arbrebb \big) &=& \arbrebb \otimes \un + \un \otimes \arbrebb + 
   2\racine \otimes\arbrea + \racine\racine\otimes \racine . 
\end{eqnarray*}}
The counit is $\varepsilon(\un)=1$ and $\varepsilon(u)=0$ for any non-empty 
forest $u$. The coassociativity of the coproduct is easily checked using an iterated 
formula for the restricted coproduct 
$$
\wtDRT(u) = \DRT(u)-u\otimes\un -\un\otimes u
= \sum_{V\amalg W=\Cal V(u) \atop W<V,\, V,W\not =\emptyset} 
u\restr{V}\otimes u\restr{W}, 
$$ 
where the restriction that $V$ and $W$ are nonempty means that $V$ and $W$ 
give rise to an ordered partition of $\Cal V(u)$ into two blocks. 
In fact, the iterated restricted coproduct writes in terms of ordered 
partitions of $\Cal V(u)$ into $n$ blocks:
$$ 
\wtDRT^{n-1}(u)=
\sum_{V_1\amalg\cdots\amalg V_n=\Cal V(u) \atop 
V_n<\cdots <V_1,\,V_j\not =\emptyset} 
u\restr{V_1}\otimes\cdots\otimes u\restr{V_n},
$$ 
and we get the full iterated coproduct $\DRT^{n-1}(u)$ by allowing empty blocks 
in the formula above. Note that the relation $<$ on subsets of vertices is not 
transitive. The notation $V_n<\cdots <V_1$ is to be understood as $V_i<V_j$ for 
any $i>j$, with $i,j\in\{1,\ldots ,n\}$. 
\medskip 

This Hopf algebra first appeared in the work of A. D\" ur in 1986 \cite{Dur}, 
as an incidence Hopf algebra (see section 6). 
It has been rediscovered and intensively studied by D.~Kreimer in 1998 
\cite{Kreimer}, as the Hopf algebra describing the combinatorial part 
of the BPHZ renormalization procedure of Feynman graphs in a scalar 
$\varphi^3$ quantum field theory. 
D.~Kreimer and A. Connes also proved in \cite{ConnesKreimer} that the 
operator $B_+$ satisfies the property
\begin{equation}
\label{cocycle}
\DRT\big(B_+(t_1\cdots t_n)\big) = 
B_+(t_1\cdots t_n)\otimes\un + (\mop{Id}\otimes B_+)\circ\DRT(t_1\cdots t_n), 
\end{equation}
for any $t_1,...,t_n\in \Cal T$. This means that $B_+$ is a 1-cocycle in the 
Hochschild cohomology of $\HRT$ with values in $\HRT$, and the couple 
$(\HRT,B_+)$ is then proved to be universal among commutative Hopf algebras 
endowed with a 1-cocycle. 
\bigskip 

The corresponding proalgebraic group $\GRT=\Hom_\CAlg(\HRT,-)$ first appeared 
in numerical analysis in the work of J. Butcher in 1963 \cite{Butcher63, Butcher, Brouder}, where it was related to the change of Runge-Kutta methods for 
solving ordinary differential equations. 
It was then studied by F. Chapoton, who related this group to the pre-Lie 
operad \cite{Chapoton,ChapotonLivernet}, see section \ref{sect:operad}.\\

For any commutative and unital algebra $A$, the group $\GRT(A)$ can be identified with the set of \textsl{formal series expanded over rooted trees with 
coefficients in $A$}, i.e. the set of maps form the set of nonempty rooted trees to $A$. These \textsl{B-series} are by now widely used in the study of approximate solutions of nonlinear differential equations\footnote{A B-series is map  from the set of rooted trees, including the empty one, to the base field $\mathbb R$ or $\mathbb C$ of real or complex numbers. A B-series can be identified with an element of the Butcher group if and only if the coefficient of the empty tree is equal to one.}\cite{HLW}. Such a map extends multiplicatively in a unique way to an $A$-valued character of $\HRT$. 
There is an injective 
morphism of graded Hopf algebras 
\begin{equation}
\label{fdb-bck}
\Psi:\HFdB\inj 6 \HRT,  
\end{equation}
as proven in Paragraph \ref{lie-rt} below. This in turn induces a surjective group homomorphism
$$\GRT(A)\to\!\!\!\!\!\to \Gdif(A).$$  
for any commutative algebra $A$.
\subsubsection{Feynman graphs and diffeographisms \cite{ConnesKreimerII}}
A \textsl{Feynman graph} is a (non planar) graph with a finite number of 
vertices, together with internal edges (with both ends attached 
to a vertex) and external edges (with one loose end, the other one being attached to a vertex). 
A graph is \textsl{connected} if any vertex can be reached from any other 
vertex by following internal edges. 
A connected graph is called \textsl{one-particle irreducible} (1PI) if it 
remains connected after removing any internal edge \cite{ConnesKreimerI,
Manchon08}.\\

In quantum field theory, Feynman graphs are a useful tool to describe some 
integrals, the \textsl{amplitudes} of the graphs, which compute the 
expectation value of the events schematically represented by the graphs. 
The \textsl{Feynman rules}, which recover the amplitude from a given graph, 
associate to loose ends (viewed as external vertices) some fixed points in 
the space-time manifold, to (internal) vertices some variable points to be 
integrated over the space-time manifold, and to edges the propagators of the 
field theory considered. After a Fourier transform, external edges are 
associated to fixed external momenta, and internal edges to variable momenta, 
which are to be integrated. 
A field theory is prescribed through a Lagrangian, which determines the types 
of Feynman graphs to be considered: the valence of the internal edges and the 
possible types of edges. The physically most interesting field theories give 
rise to Feynman graphs whose amplitude is a divergent integral, and which are 
therefore called \textsl{divergent graphs}. Physicists have developed some 
techniques, called \textsl{renormalization}, to extract a meaningful finite 
contribution from such graphs. 
A common step of any renormalization process consists in a purely combinatorial manipulation  
of graphs, in order to take care of possible divergent subgraphs. 
D. Kreimer proved in \cite{Kreimer} that this step can be efficiently 
described in terms of the Hopf algebra of rooted trees presented in section 
\ref{sssect:bck}, where rooted trees encode the hierarchy of divergent 
subgraphs. A. Connes and D. Kreimer then proved in \cite{ConnesKreimer,
ConnesKreimerI,ConnesKreimerII} that the renormalization Hopf algebra can be 
defined directly on Feynman graphs.\\

The so-called \textsl{Connes-Kreimer Hopf algebra}, or \textsl{Hopf algebra 
of Feynman graphs}, is the free commutative and unital algebra 
$\HFG = \K[\Cal F]$ generated by the set $\Cal F$ of connected and 1PI graphs 
which are \textsl{superficially divergent}, i.e. graphs which are divergent 
even after having renormalized any divergent subgraph. For any given field 
theory, the superficial divergence of a Feynman graph can be stated as a 
combinatorial constraint, and implies, for instance, that the number of external 
legs does not exceed some critical number. 
The product of 1PI divergent graphs is given by their disjoint union. Hence $\HFG$ is the vector space generated by superficially divergent locally 1PI graphs, connected or not, including the empty graph.
The coproduct on $\HFG$ is given on any generator $\Gamma\in \Cal F$ by:
\begin{equation*}
\Delta(\Gamma)=\un\otimes\Gamma+\Gamma\otimes\un
   +\sum_{\emptyset\subsetneq\gamma\subsetneq\Gamma}\Gamma/\gamma,
\end{equation*}
where the sum runs over the set of proper and non-empty subgraphs of $\Gamma$ 
having connected components which are 1PI and superficial divergent. The 
\textsl{contracted graph} $\Gamma/\gamma$ is obtained by shrinking each
connected component of $\gamma$ onto a point
\cite{ConnesKreimerI, Manchon08}. 
Note that the condition which ensures that a contracted graph $\Gamma/\gamma$ 
belongs to $\HFG$ coincides with the requirement that $\gamma$ only has  
superficially divergent connected components \cite{ConnesKreimerI, Agarwala}. \\

The counit on $\HFG$ is given on any generator $\Gamma\in \Cal F$ by 
$\varepsilon(\Gamma)=0$, and the bialgebra thus obtained is graded by the 
\textsl{loop number} of the graphs\footnote{The loop number is defined by $L(\Gamma)=I(\Gamma)-V(\Gamma)+|\pi_0(\Gamma)|$, where $I(\Gamma$) stands for the number of internal edges, $V(\Gamma)$ stands for the number of vertices, and $|\pi_0(\Gamma)|$ denotes the number of connected components of the graph.}, and connected. Therefore it is automatically 
a (commutative) Hopf algebra, with antipode defined recursively. 
The convolution group $\GFG = \Hom_\CAlg(\HFG,-)$ 
is another example of a prounipotent group, called 
the group of \textsl{diffeographisms} by Connes and Kreimer in 
\cite{ConnesKreimerII}.\\

This affine group scheme plays a key role in the description of the renormalization of quantum fields. Indeed, any splitting $A=A_-\oplus A_+$ of the target algebra into two subalgebras (with $1\in A_+$) yields a Birkhoff decomposition:
$$\GFG={\GFG}_-\cdot{\GFG}_+,$$
with ${\GFG}_\pm=\Hom_\CAlg(\HFG,A_\pm)$. For any $\varphi=\varphi_-^{-1}\varphi_+\in\GFG$, the renormalized character is the component $\varphi_+$, whereas the component $\varphi_-$ is the counterterm character \cite{ConnesKreimerI}. The combinatorics underlying this renormalization process relies on a Rota-Baxter property for the projection $A\to\!\!\!\!\!\to A_-$, and applies to any connected graded Hopf algebra in full generality (see \cite{Ebrahimi-FardGuo, Manchon08} and see \cite{EP1, EP2} for a generalization outside the Rota-Baxter framework).\\

The Hopf algebra of Feynman graphs is naturally endowed with several 
operators analogous to the grafting operator $B_+$ defined on the Hopf algebra 
of rooted trees. In fact, for any primitive graph $\gamma$, i.e.~such that 
$\DFG(\gamma)=\gamma\otimes\un+\un\otimes\gamma$, one can define an 
\textsl{insertion operator} $\delta\mapsto B_\gamma(\delta)$ by summing up all 
possibilities to insert $\gamma$ inside $\delta$, and taking into account some 
appropriate symmetry factors (see L. Foissy's notes in the present volume). 
The insertions are so chosen that, for the resulting graph $\Gamma$, the 
subgraph $\delta$ appears as the contracted graph $\Gamma/\gamma$.
For scalar or fermionic field theories, each operator $B_\gamma$ 
satisfies the same cocycle property \eqref{cocycle} as $B_+$, namely:
\begin{equation}\label{cocycle2}
\Delta\big(B_\gamma(\delta)\big) = 
   B_\gamma(\delta)\otimes\un+(\mop{Id}\otimes B_\gamma)\circ\Delta(\delta).
\end{equation}
For gauge theories, equation \eqref{cocycle2} takes place in the quotient 
$\ovHFG$ of $\HFG$ by the ideal describing the Ward-Takahashi identities 
of the gauge theory \cite{VanSuijlekom}. Due to the universal property 
of the Hopf algebra $\HRT$ with the operator $B_+$ \cite{F}, for any primitive 
graph $\gamma\in\HFG$ there is a unique graded Hopf algebra morphism
\begin{equation}
\Phi_\gamma:\HRT\longrightarrow \HFG \qquad\mbox{\big(resp. to $\ovHFG$\big)}
\end{equation}
such that $\Phi_\gamma\circ B_+=B_\gamma\circ \Phi_\gamma$. Composing $\Phi_\gamma$ 
with the embedding $\Psi:\HFdB\inj 6\HRT$, we get an injective 
Hopf algebra morphism
\begin{equation*}
\Phi_\gamma\circ\Psi:\HFdB\inj 6 \HFG 
\qquad\mbox{\big(resp. to $\ovHFG$\big)}.
\end{equation*}
This embedding was first found by Connes and Kreimer in 
\cite{ConnesKreimerII}. The reader will again find a more detailed exposition 
in L. Foissy's note in the present volume. See also \cite{BS} for a concrete 
application of these Hopf algebra morphisms in quantum field theory. Finally, the map $\Phi_\gamma\circ\Psi$ yields a surjective morphism from the 
group of diffeographisms (resp. a subgroup) to the group of formal 
diffeomorphisms: for any commutative and unital algebra $A$, there is a 
canonical projection of groups
\begin{equation*}
\GFG(A)\to\!\!\!\!\!\to \Gdif(A) 
\qquad\mbox{\big(resp. $\ovGFG(A)\to\!\!\!\!\!\to \Gdif(A)$\big)}.
\end{equation*}

\subsection{The non-commutative Fa\`a di Bruno Hopf algebra}
\label{ssect:HFdBnc}

Many of the free commutative Hopf algebras related to the Fa\`a di Bruno 
Hopf algebra admit a natural non-commutative analogue, which in general 
turns out to be simpler from a combinatorial and from an operadic point 
of view. \\

In \cite{F}, L. Foissy introduced a non-commutative version of the Hopf 
algebra $\HRT$ based on \textsl{planar rooted trees}, and therefore denoted 
by $\HPRT$, which has the remarkable property that it is self-dual. The set of generators of $\HPRT$ as a free noncommutative algebra (planar rooted trees) canonically projects itself onto the set of generators of $\HRT$ as a free commutative algebra (non-planar rooted trees).\\

In the case of the Fa\`a di Bruno Hopf algebra, the generators are labeled by  
natural numbers, and it is also the case for the non-commutative lift described in \cite{BFK}. 
This \textsl{non-commutative Fa\`a di Bruno Hopf algebra} is defined as 
the free associative algebra on the same generators as $\HFdB$, that is 
$\HFdBnc=\K\langle x_1,x_2,...\rangle$, with the usual counit 
$\varepsilon(x_n)=\delta_{n,0}$, where $x_0=1$, and the coproduct lifted as 
\begin{equation*}
\DFdBnc(x_n) = \sum_{m=0}^n
\left(\sum_{k_0+\cdots +k_m=n-m\atop k_i\geq 0}\!\! 
x_{k_0}\cdots x_{k_m}\right)\otimes x_m.
\end{equation*}
It is still a graded and connected bialgebra, and the recursive relation 
for the antipode yields the following \textsl{non-commutative Lagrange 
formula} (see \eqref{lagrange2} and remark therein), quite involved compared to its commutative counterpart \eqref{Lagrange-formula}:
$$
S(x_n) = -x_n-\sum_{k=1}^{n-1} (-1)^k \!\!\!\!\!\!\!\!
\sum_{n_1+\cdots +n_{k+1}=n \atop n_i> 0}\!\!\!\!\!\! 
\lambda(n_1,...,n_k)\ \ x_{n_1}\cdots x_{n_{k+1}},
$$
where 
$$
\lambda(n_1,...,n_k)=\sum_{m_1+\cdots +m_k=k\atop m_1+\cdots +m_h\geq h\hbox{ \sevenrm for }h<k} 
\binom{n_1+1}{m_1}\cdots \binom{n_k+1}{m_k}.
$$
It is then easy to check that $\HFdB$ coincides with the abelian quotient 
$\HFdBnc/[\HFdBnc,\HFdBnc]$, and therefore that the surjective algebra 
morphism $\pi: \HFdBnc\to\!\!\!\!\!\to\HFdB$ which identify the generators 
is a Hopf algebra morphism.\\

The commutative Hopf algebra $\Hinv$ admits a similar non-commutative lift 
$\Hinvnc=\K\langle x_1,x_2,...\rangle$, with exactly the same coproduct 
formula $\Dinv(x_n)=\sum_{p=0}^n x_p\otimes x_{n-p}$. The resulting 
non-commutative but still co-commutative Hopf algebra is well known 
under the name of \textsl{Hopf algebra of non-commutative symmetric functions}, the graded dual of which is the Hopf algebra of \textsl{quasi-symmetric functions} \cite{Gelfand}.\\

\subsection{Proalgebraic groups on non-commutative algebras}
Proalgebraic groups on non-commutative unital algebras in fact do exist, 
and should be exhibited by means of \textsl{free products of algebras}. This well-established fact \cite{Berstein, V87, Zhang} deserves some explanation, as it may be not so well-known to non-specialists.  Although the functor $\Ginv$ can be naturally extended to noncommutative algebras along these lines, there seem however to be no means to extend the functor $\Gdif$ to non-commutative algebras in this framework:  we come back to this point in Paragraph \ref{ssect:HFdBnc-question} below.\\

\subsubsection{Cogroups \cite{Berstein}}

The free product of associative algebras can be defined as the unique 
operation $\star$ between associative algebras such that for any two 
vector spaces $V$ and $W$, there is a canonical algebra isomorphism
$$T(V)\star T(W)\cong T(V\oplus W)$$
between associated tensor algebras. For any unital $\K$-algebras $A$ and $B$, 
the free product $A\star B$ can explicitely be defined as the algebra 
\begin{equation*}
A\star B := T(\overline A\oplus \overline B)/J,
\end{equation*}
where $\overline A=A/\K\cdot 1_A$ and $\overline B=B/\K\cdot 1_B$ are 
the augmentation ideals, and where $J$ is the ideal of the tensor algebra 
$T(\overline A\oplus \overline B)$ generated by 
$\{a\otimes a'-aa',\,a,a'\in \overline A\}$ and 
$\{b\otimes b'-bb',\,b,b'\in \overline B\}$. Any element of $A\star B$ is 
represented by an element of $T(\overline A\oplus\overline B)$, which can be written
as a finite sum of terms of the type:
\begin{equation}
\label{ecriture}
x_1\otimes\cdots\otimes x_r,
\end{equation}
where $x_j$ is in $\overline A$ if and only if $x_{j+1}$ is in $\overline B$ 
for $j=1,\ldots,r-1$, and vice-versa. We write such a term $x_1\cdots x_r$. 
The product of two elements in $A\star B$ is given by the concatenation of 
their representative elements in $T(\overline A\oplus \overline B)$. 
In particular, there are canonical algebra embeddings 
$A\cong A\star \K \subset A\star B$ and $B\cong \K\star B\subset A\star B$, 
and therefore also an embedding of vector spaces
$$j: A\otimes B \inj 6 A\star B,$$
where $j(A\otimes B)$ contains exactely the terms $x_1 \otimes x_2\in A\star B$
with $x_1\in A$ and $x_2\in B$. 
Note however that the map $j$ is not an algebra morphism, because the 
multiplication in $A\otimes B$ is $(m_A\otimes m_B)\circ \tau_{23}$, while 
the multiplication in $A\star B$ is the concatenation.\\

The free product is characterized by the following universal property: 
for any associative unital algebras $A,B,C$ and for any unital algebra 
morphisms $f:A\to C$ and $g:B\to C$, there is a unique algebra morphism 
$f\ \overline\star\ g:A\star B\to C$ which extends both $f$ and $g$. 
The algebra morphism $f\ \overline\star\ g$ is defined by
\begin{equation*}
f\ \overline\star\ g([x_1\otimes\cdots\otimes x_r]):=h(x_1)\cdots h(x_r),
\end{equation*}
where $h:\overline A\oplus\overline B\to C$ is defined as $h(x)=f(x)$ 
if $x\in\overline A$, and as $h(x)=g(x)$ if $x\in\overline B$. 
This implies that there exists also a canonical projection of vector spaces 
$$\pi:A\star B\to\!\!\!\!\!\to A\otimes B,$$
which consists in ``putting the elements of $\overline A$ on the left and 
the elements of $\overline B$ on the right'', namely:
\begin{eqnarray*}
\pi([x_1\otimes\cdots\otimes x_r])&=&
\left\{\begin{array}{ll}
x_1 x_3\cdots \otimes x_2 x_4\cdots & \hbox{if $x_1\in \overline A$},\\
x_2 x_4\cdots \otimes x_1 x_3\cdots & \hbox{if $x_1\in \overline B$}.
\end{array}\right. 
\end{eqnarray*}
It is easily checked that, contrarily to the embedding $j$, the projection 
$\pi$ is an algebra morphism.\\

Finally, the free product $\star$ is a bifunctor: given two unital algebra 
morphisms $f:A\to A'$ and $g:B\to B'$, there exists a straightforward 
algebra morphism $f\star g:A\star B\to A'\star B'$. The multiplication 
$m_A:A\otimes A\to A$ uniquely extends to an algebra morphism 
$\widetilde m_A:A\star A\to A$ such that $\widetilde m_A=m_A\circ \pi$. 
We also have $\widetilde m_A=\mop{Id}_A\overline\star \mop{Id}_A$, and for any 
unital algebra morphisms $f:A\to C$ and $g:B\to C$ we have:
$$
f\overline\star g=\widetilde m_C\circ (f\star g).
$$

A \textsl{cogroup in the category of unital associative $\K$-algebras}
\cite{EH, Berstein, BH}, also called \textsl{$H$-algebra} in \cite{Zhang}, 
is a unital associative $\K$-algebra $H$ together with a ``coproduct'' 
$\Delta:H\to H\star H$, a ``counit'' $\varepsilon:\K\to H$ and an 
``antipode'' $S:H\to H$ such that:
\begin{align*}
(\id\star\Delta)\circ\Delta &=(\Delta\star \id)\Delta:H\to H\star H\star H,\\
(\varepsilon\star \id)\circ \Delta &=(\id\star\varepsilon)\circ\Delta=\id,\\
\widetilde m_H\circ(S\star \id)\circ\Delta & 
=\widetilde m_H\circ(\id\star S)\circ\Delta=i\circ\varepsilon.
\end{align*}
A similar notion has been developed by D. Voiculescu under the name "dual group"  in the context of operator algebras \cite{V87}. The following proposition, which can be easily derived from the Yoneda lemma (see e.g. \cite[Paragraph III.2]{Maclane}), is well-known to experts. However we give a 
pedestrian proof adapted to our context:

\begin{prop}
Let $\mop{Sets}$ and $\Alg$ stand for the category of sets and unital associative algebras respectively. Let $H$ be a unital algebra and let $G$ be the functor 
\begin{align*}
\Alg & \longrightarrow  \mop{Sets} \\ 
A & \longmapsto  G(A) =  \Hom_{\Alg}(H,A).
\end{align*}
Then $G$ takes its values in the category of groups if and only if the representing object $H$ is a 
cogroup in the category of unital $\K$-algebras.
\end{prop}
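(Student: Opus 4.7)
The strategy rests on one central observation: the universal property of the free product yields, for each unital algebra $A$, a natural bijection
$$\Hom_{\Alg}(H\star H,A)\;\longleftrightarrow\; G(A)\times G(A),\qquad \psi\longmapsto (\psi\circ\iota_1,\psi\circ\iota_2),$$
with inverse $(f,g)\mapsto f\ \overline\star\ g$, and analogously $\Hom_{\Alg}(H\star H\star H,A)\cong G(A)^3$, while $\Hom_{\Alg}(\K,A)$ is a singleton. Via these bijections, natural $n$-ary operations on the functor $G$ correspond to algebra morphisms $H\to H^{\star n}$, which is the computational heart of the argument.

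For the ``if'' direction, suppose $H$ carries a cogroup structure $(\Delta,\varepsilon,S)$. I define on $G(A)$ the operations
$$f\cdot g := (f\ \overline\star\ g)\circ\Delta,\qquad e_A:=i_A\circ\varepsilon,\qquad f^{-1}:=f\circ S,$$
where $i_A:\K\to A$ is the unit. Associativity of $\cdot$ follows from coassociativity, since both $(f\cdot g)\cdot h$ and $f\cdot(g\cdot h)$ equal the composition of the single morphism $f\ \overline\star\ g\ \overline\star\ h:H\star H\star H\to A$ with either $(\Delta\star\id)\circ\Delta$ or $(\id\star\Delta)\circ\Delta$. The counit axioms give $e_A\cdot f=f\cdot e_A=f$, and the antipode axioms give $f\cdot f^{-1}=f^{-1}\cdot f=e_A$. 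Naturality in $A$ is immediate, so $G$ takes values in groups.

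For the ``only if'' direction, assume each $G(A)$ is a group, naturally in $A$. The group $G(H\star H)$ contains two canonical distinguished elements $\iota_1,\iota_2\in\Hom_{\Alg}(H,H\star H)$, and I \emph{define}
$$\Delta:=\iota_1\cdot\iota_2\in\Hom_{\Alg}(H,H\star H),$$
together with $\varepsilon$ as the (unique) element of the singleton group $G(\K)$, and $S\in G(H)$ as the inverse of $\id_H$. The key lemma is that the original product on every $G(A)$ can be recovered from $\Delta$: given $f,g\in G(A)$, apply naturality to $f\ \overline\star\ g:H\star H\to A$. The induced group homomorphism $G(H\star H)\to G(A)$ sends $\iota_1$ and $\iota_2$ to $f$ and $g$ respectively, hence $\iota_1\cdot\iota_2\mapsto f\cdot g$, which gives the formula $(f\ \overline\star\ g)\circ\Delta=f\cdot g$. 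Analogous arguments give $f^{-1}=f\circ S$ and $e_A=i_A\circ\varepsilon$.

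The main technical step is then to derive the cogroup axioms from the group axioms in $G$. Coassociativity follows by computing $\iota_1\cdot\iota_2\cdot\iota_3$ in the group $G(H\star H\star H)$ two different ways, using the identity just proved with $(f,g)=(\iota_1\cdot\iota_2,\iota_3)$ and $(f,g)=(\iota_1,\iota_2\cdot\iota_3)$; both sides yield respectively $(\Delta\star\id)\circ\Delta$ and $(\id\star\Delta)\circ\Delta$, which must coincide by associativity in $G(H\star H\star H)$. The counit and antipode axioms are obtained in the same spirit by specializing the recovery formula to $(e_H,\id_H)$, $(\id_H,e_H)$, $(S,\id_H)$ and $(\id_H,S)$ and comparing with group axioms in $G(H)$. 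This is essentially the Yoneda lemma for the representable functor $G$, rendered explicit in our setting.
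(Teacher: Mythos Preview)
Your proof is correct and follows essentially the same approach as the paper: both directions use the universal property of the free product, define the group law as $(f\ \overline\star\ g)\circ\Delta$ (equivalently $\widetilde m_A\circ(f\star g)\circ\Delta$), and for the converse set $\Delta=\iota_1\cdot\iota_2$ in $G(H\star H)$, $\varepsilon$ the unit of $G(\K)$, and $S$ the inverse of $\id_H$ in $G(H)$. You go further than the paper's sketch by explicitly deriving the cogroup axioms via naturality, which the paper leaves implicit.
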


\begin{proof}
Suppose that $H$ is a cogroup in the category of unital $\K$-algebras, 
and consider any associative unital algebra $A$. Then we set 
$g\ast h=\widetilde m_A\circ (g\star h)\circ \Delta$ for any unital algebra 
morphisms $g,h:H\to A$. The associativity of this product is obvious, 
as well as the fact that $1_A\circ\varepsilon$ is the unit. The fact that 
$g\ast h$ is still an algebra morphism comes from the fact that 
$\widetilde m_A$, $g\star h$ and $\Delta$ are. The inverse of any $g$ is 
given by $g\circ S$, which proves that $ \Hom_{\Alg}(H,A)$ is a group. 
\medskip 

Conversely, if the functor $G$ takes its values in the category of groupes and is representable as 
$\Hom_{\Alg}(H,-)$, consider the two embeddings $j_1,j_2:H\to H\star H$, 
which are algebra morphisms, obtained from the two obvious embeddings of 
$H$ into $T(\overline H\oplus\overline H)$. 
Then $j_1\star j_2:H\star H\to H\star H$ is manifestly equal to the identity. 
It is then easy to check that the ``coproduct'' $\Delta$ is given by the 
product $j_1\cdot j_2$ in the group $\Hom_{\Alg}(H,H\star H)$. 
The ``counit'' is the unit of the group $\Hom_{\Alg}(H,\K)$, and the 
``antipode'' is the inverse in the group $\Hom_{\Alg}(H,H)$.
\end{proof}

Note that any cogroup $H$ is also a Hopf algebra, by composing the 
``coproduct'' $\Delta$ on the left by the projection 
$\pi:H\star H\to\!\!\!\!\! \to H\otimes H$, thus getting a genuine coproduct 
on $H$. The converse is not true: if $H$ is a Hopf algebra with coproduct 
$\D$, the lift $\widetilde \D=j\circ\D: H\to H\star H$ is not necessary an 
algebra morphism, because $j:H\otimes H\inj{6} H\star H$ is not. 
Note also that any cogroup in the category of associative unital algebras 
is a free algebra \cite[Theorem 1.2]{Berstein} or \cite{Zhang}. 

\subsubsection{The cogroup of invertible series} 
One can note that the proalgebraic group $\Ginv$ 
still makes sense on associative unital algebras which are not commutative: 
for any algebra $A$ of this type, the pointwise multiplication of two 
formal series of the form $f(t)=1_A+\sum_{n\geq 1} f_n\ t^n$ with coefficients 
$f_n\in A$ is still an associative operation, and gives rise to a group 
$\Ginv(A)$ which is no longer abelian. One may then wonder about the existence of a functor which associates to 
any unital algebra $A$ (commutative or not) a convolution group 
$\Hom_{\Alg}(H,A)$ represented by a non-commutative Hopf algebra $H$, 
which would reproduce the group $\Ginv(A)$ for $H=\Hinvnc$, and which would 
give rise to a new group of formal diffeomorphisms for $H=\HFdBnc$. 
Such a functor cannot exist, because if $A$ is a non-commutative algebra, 
the convolution product of two elements in $\Hom_{\Alg}(H,A)$ has no reason 
to be an algebra morphism. In the next section we give an explanation of the existence of the non-abelian 
group $\Ginv(A)$ for $A$ non-commutative, but our argument does not justify 
the existence of the non-commutative Hopf algebra $\HFdBnc$ in terms of 
groups. We come back to this point in Paragraph \ref{ssect:HFdBnc-question} below.\\

For the non-commutative Hopf algebra $\Hinvnc=\K\langle x_1,x_2,...\rangle$,  
the free product $\Hinvnc\star \Hinvnc$ can be described as the free 
algebra on two types of variables, one for each copy of $\Hinvnc$, that is, 
$\Hinvnc\star \Hinvnc=\K\langle x_1,y_1,x_2,y_2...\rangle$. The coproduct 
$\Dinvstar: \Hinvnc\longrightarrow \Hinvnc\star \Hinvnc$ defined on 
generators as 
\begin{align*}
\Dinvstar(x_n)=\sum_{p=0}^n\ x_p\ y_{n-p}\ 
\in \Hinvnc\otimes \Hinvnc \subset \Hinvnc\star \Hinvnc, 
\end{align*}
is obviously coassociative, and makes $\Hinvnc$ into a cogroup. Its 
associated functor $\Ginv= \Hom_\Alg(\Hinvnc,-):\Alg\longrightarrow \Groups$ 
then defines the \textsl{proalgebraic group of invertible series with 
non-commutative coefficients}, namely, for any non-commutative unital 
associative algebra $A$, the non-abelian group 
$$\Ginv(A)=\Big\{ f(t)= \sum f_n t^n |\ f_0=1\Big\} \subset A[[t]]$$
with pointwise product $(fg)(t)=f(t)\ g(t)$. 
The Hopf algebra coproduct 
$\Dinv:\Hinvnc\longrightarrow \Hinvnc\otimes \Hinvnc$ 
is then just the composition of the cogroup coproduct $\Dinvstar$ 
by the canonical projection 
$\Hinvnc\star \Hinvnc \to\!\!\!\!\! \to \Hinvnc\otimes \Hinvnc$. 


\subsection{Open questions about the non-commutative Fa\`a di Bruno 
Hopf algebra}
\label{ssect:HFdBnc-question}

Contrarily to invertible series with ordinary multiplication, formal 
diffeomorphisms $\Gdif(A)$ with non-commuta\-tive coefficients do not form 
a group. Indeed, the composition is not associative! 
As a consequence, the noncommutative Fa\`a di Bruno coproduct 
$\DFdB^{\smop{nc}}$ cannot be derived from a cogroup coproduct 
$\Delta:\HFdBnc \longrightarrow \HFdBnc\ast \HFdBnc$ by composing it on the 
left with the projection $\pi$ onto $\HFdBnc\otimes \HFdBnc$. One may then ask the following questions: 
\begin{itemize}
\item 
Which properties do characterise the set $\Gdif(A)$ of formal series with 
coefficients in a non-commutative algebra $A$, generalizing those of a group, 
which would allow to define $\Gdif$ as a functor represented by $\HFdBnc$? 
\item
Is there a cogroup coproduct $\D:H\longrightarrow H\ast H$ such that 
$\pi\circ\Delta:H\to H\otimes H$ gives $\DFdB$, which would then allow to 
define a proalgebraic group of formal series 
$G(A)=\{ f(t)= t + \sum_{n\geq 2} f_n t^{n+1}\ |\ f_n\in A\}$ 
with coefficients in non-commutative algebras $A$, with 
group law induced by $\D$ and therefore \textsl{different} from the 
composition $(f\circ g)(t)=\sum f_n g(t)^{n+1}$? 
If such a group exists, it is probably necessary to rearrange the terms 
$f_n t^{n+1}$ of each series in some non-commutative fashion. 
\end{itemize}
These questions are still open. 

\section{Lie algebra interpretation}
\label{sect:witt}

\subsection{Vector fields, Witt and Virasoro Lie algebras}\label{witt-virasoro}
The reader will find a comprehensive treatment of these infinite-dimensional Lie algebras and their representation theory in \cite{KacRaina} or in \cite{GO}. The Lie algebra $\Vect(\S^1)$ of \textsl{regular vector fields on the circle} 
is, by definition, the Lie algebra of derivations of the ring of Laurent 
polynomials $\R[z,z^{-1}]$. It can be seen 
as the real vector space 
$$
\Vect(\S^1)=\Span_\R\Big\{ e_n= \frac{1}{2i\pi}e^{2i\pi  nt} \frac{\d}{\d t},\ n\in\Z \Big\}, 
$$
where $z=e^{2i\pi t}$. The Lie bracket is given by:
$$
[e_m,e_n]=(n-m)\ e_{n+m}.
$$
The same Lie algebra can be considered over any base field $\K$, and in the 
case of a field of characteristic $p>0$ it was first introduced by E. Witt 
in a Hamburg Seminar in 1939, and reported by H. Zassenhaus in 
\cite{Zassenhaus}, with generators restricted to $1\leq n \leq p-2$.\\

By extension, the complex Lie algebra $\Vect(\S^1)_\CC$, which can be 
understood as the Lie algebra of polynomial complex functions on the circle, 
is now called the \textsl{Witt Lie algebra} and denoted by $\Cal W(\CC)$. 
Its one-dimensional central extension is the so-called \textsl{Virasoro Lie 
algebra}, first found in positive characteristic by R.~E. Block in 1966 
\cite{Block}, and then in characteristic zero by I.M. Gelfand and D.B. Fuks 
in 1968 \cite{GelfandFuks}. These two Lie algebras describe the infinitesimal 
conformal transformations in two real dimensions (with or without anomaly), 
and play a central role in conformal field theory and in string theory \cite{GO}. Two-dimensional conformal field theory is deeply related to the theory of vertex operator algebras (see e.g. the introduction of \cite{FLM}).\\ 

The \textsl{Lie algebra of formal vector fields on a line} was defined 
by E. Cartan as early as 1909 \cite{Cartan}:
$$\Cal W_+(\K)=\Span_\K\Big\{ e_n= t^{n+1} \frac{\d}{\d t},\ n\geq -1 \Big\}.
\footnote{This Lie algebra is denoted $W_1(\K)$ by D.B. Fuks in \cite{Fuks}, 
who considers also the Lie algebras of formal vector fields on any 
$n$-dimensional space, $W_n(\K)$.}$$
When $\K=\CC$, $\Cal W_+(\CC)$ gives the vector fields on $\S^1$ which extend 
holomorphically to the unit disk. In his book, Fuks considers as well the 
following Lie subalgebras of $\Cal W_+(\K)$: 
$$L_k = \Span_\K\Big\{ e_n= t^{n+1} \frac{\d}{\d t},\ n\geq k \Big\},$$
satisfying the properties 
$$\Cal W_+(\K)=L_{-1} \supset L_0 \supset L_1 \supset\cdots, 
\qquad\mbox{and}\qquad [L_m,L_n]\subset L_{m+n} \quad\hbox{for $m,n\geq 0$}.$$

In particular, the Lie algebra $L_1$ is pronilpotent \cite{Cartier}. 
We show in the next section that $L_1$ is the Lie algebra of the group of 
formal diffeomorphisms $\Gdif(\K)$.

\subsection{Lie algebra of algebraic and proalgebraic groups}

Let $G_H$ be a proalgebraic group represented by a commutative Hopf algebra 
$H$ over a field $\K$. For any commutative unital $\K$-algebra $A$, an  
\textsl{$A$-valued infinitesimal character of $H$}, or \textsl{$A$-valued 
derivation on $H$} \cite{ConnesKreimer, Manchon08}, is a linear map $\alpha:H\to A$ such that, for any 
$x,y\in H$, the following relation holds:
$$\alpha(xy)=\alpha(x)e_A(y)+e_A(x)\alpha(y),$$
where $e_A=i_A\circ\varepsilon$. 
Since $e_A(1_H)=1_A$, this implies that $\alpha(1_H)=0$. 
Then, it is easily checked that the set ${\frak g}_H(A)=\Der(H,A)$ of 
$A$-valued derivations on $H$ is a Lie algebra, with bracket given by 
$$[\alpha,\beta] := \alpha\ast\beta-\beta\ast\alpha,$$ 
for any $\alpha,\beta\in {\frak g}_H(A)$. Moreover, the map 
$A\mapsto {\frak g}_H(A)$ is clearly a functor ${\frak g}_H$ from the category 
of commutative unital algebras into the category of Lie algebras. 
If $H$ is a finitely generated algebra (resp. inductive limit of finitely 
generated algebras), the functor ${\frak g}_H$ is by definition the Lie 
algebra of the algebraic (resp. proalgebraic) group $G_H$. 

\begin{prop}
Let $G_H$ be a prounipotent group, with Lie algebra ${\frak g}_H$, and suppose 
that the base field $\K$ is of characteristic zero. For any commutative unital 
algebra $A$, the exponential map 
\begin{eqnarray*}
\exp:{\frak g}_H(A) &\longrightarrow& G_H(A)\\
\alpha&\longmapsto& \exp (\alpha):= e_A +\sum_{n\ge 1}\frac{\alpha^{\ast n}}{n!}
\end{eqnarray*}
is a bijection, and we have $\exp(\alpha)\ast\exp(\beta)=\exp(\alpha+\beta)$ 
if $[\alpha,\beta]=0$. The inverse of the exponential map is the logarithm, 
defined by 
\begin{eqnarray*}
\log:G_H(A) &\longrightarrow& {\frak g}_H(A)\\
\varphi=e_A +\gamma &\longmapsto& 
\log (\varphi)=\sum_{n\ge 1}\frac{(-1)^{n-1}\gamma^{\ast n}}{n}. 
\end{eqnarray*}
\end{prop}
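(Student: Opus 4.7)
The plan is to exploit the prounipotence of $G_H$: the connectedness filtration $H^0\subset H^1\subset\cdots$ makes the convolution algebra $\Cal L(H,A)$, with the ideal $\Cal L_0(H,A)$ of linear maps vanishing on $1_H$, behave like a pronilpotent algebra, so the formal series defining $\exp$ and $\log$ reduce to \emph{finite} sums on each filtration piece.

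First I would establish convergence. Any derivation $\alpha$ satisfies $\alpha(1_H)=0$, and any character $\varphi=e_A+\gamma$ of $H$ satisfies $\gamma(1_H)=0$; so in both cases the relevant maps lie in $\Cal L_0(H,A)$. Using that the reduced coproduct sends $H^n$ into $\sum_{p+q=n,\,p,q\ge 1}H^p\otimes H^q$, an immediate induction shows that for any $\alpha_1,\ldots,\alpha_k\in\Cal L_0(H,A)$ the convolution $\alpha_1*\cdots*\alpha_k$ vanishes on $H^{k-1}$. Therefore $\exp(\alpha)(x)$ and $\log(\varphi)(x)$ are, for $x\in H^n$, truncations of the series at order $n$. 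This makes the maps well-defined without any topology.

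Next I would check that $\exp$ lands in $G_H(A)$ and $\log$ in $\Cal g_H(A)$. The key ingredient is a Leibniz-type identity: if $\alpha$ is a derivation, then a direct computation using $\Delta(xy)=\Delta(x)\Delta(y)$ and the commutativity of $A$ yields, by induction on $n$,
\begin{equation*}
\alpha^{*n}(xy)=\sum_{k=0}^n\binom{n}{k}\alpha^{*k}(x)\,\alpha^{*(n-k)}(y),
\end{equation*}
with the convention $\alpha^{*0}=e_A$. Summing with the factor $1/n!$ and re-indexing gives $\exp(\alpha)(xy)=\exp(\alpha)(x)\cdot\exp(\alpha)(y)$, so $\exp(\alpha)$ is a character. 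Symmetrically, if $\varphi=e_A+\gamma$ is a character, then $\gamma$ satisfies $\gamma(xy)=\gamma(x)e_A(y)+e_A(x)\gamma(y)+\gamma(x)\gamma(y)$, which combined with the analogous binomial expansion for $\gamma^{*n}(xy)$ and the identity $\sum_{n\ge 1}\frac{(-1)^{n-1}}{n}\binom{n}{k}=\frac{(-1)^{k-1}}{k}$ for $k\ge 1$ (and vanishing of the ``diagonal'' contributions for $k=0$ or $n-k=0$ handled separately) gives $\log(\varphi)(xy)=\log(\varphi)(x)e_A(y)+e_A(x)\log(\varphi)(y)$.

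The bijection $\exp\circ\log=\id$ and $\log\circ\exp=\id$ then follows from the classical formal power series identities $\exp(\log(1+T))=1+T$ and $\log(\exp(T))=T$ in $\Q[[T]]$: for each fixed $x\in H^n$ both sides are polynomial expressions in the convolution powers $\alpha^{*k}$ (resp.\ $\gamma^{*k}$), $k\le n$, and the identities are true termwise. Finally, if $[\alpha,\beta]=0$ in the convolution algebra, the usual binomial expansion
\begin{equation*}
(\alpha+\beta)^{*n}=\sum_{k=0}^n\binom{n}{k}\alpha^{*k}*\beta^{*(n-k)}
\end{equation*}
holds, and the Cauchy product of $\exp(\alpha)$ and $\exp(\beta)$ collapses to $\exp(\alpha+\beta)$ exactly as in the scalar case.

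The main obstacle will be the verification that $\exp(\alpha)$ is multiplicative (and the dual statement for $\log$): everything else is either a soft consequence of the pronilpotent filtration or a formal-series manipulation. The Leibniz identity for $\alpha^{*n}$, though elementary, must be proved carefully from the derivation rule for $\alpha$, the compatibility of $\Delta$ with the product, and the commutativity of $A$ — which is what selects the binomial coefficients and ultimately produces the Cauchy product needed to factor $\exp(\alpha)(xy)$.
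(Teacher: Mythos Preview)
Your proof is essentially correct and agrees with the paper on the well-definedness argument (local finiteness via the connectedness filtration), on the verification that $\exp(\alpha)$ is a character via the Leibniz formula for $\alpha^{*n}$, and on the formal inversion $\exp\circ\log=\log\circ\exp=\mathrm{id}$. The paper in fact only says these steps are ``checked by direct inspection'', so you are more explicit there.

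The one place where the two proofs genuinely diverge is the verification that $\log(\varphi)$ is an infinitesimal character. The paper does \emph{not} attack this directly: instead it defines $\varphi^{*\lambda}:=\exp\big(\lambda\log(\varphi)\big)$, observes that $\varphi^{*\lambda}(xy)-\varphi^{*\lambda}(x)\varphi^{*\lambda}(y)$ is polynomial in $\lambda$ and vanishes for all positive integers $\lambda$ (since convolution powers of a character are characters), hence vanishes identically; differentiating at $\lambda=0$ then gives the derivation property for $\log(\varphi)$. This trick sidesteps any combinatorics.

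Your direct route also works, but the combinatorial identity you quote, $\sum_{n\ge 1}\frac{(-1)^{n-1}}{n}\binom{n}{k}=\frac{(-1)^{k-1}}{k}$, is not the right one (it is a divergent sum, and the finite truncations do not give the claimed value). The clean way to carry out your approach is to note that $\gamma(xy)=\gamma(x)e_A(y)+e_A(x)\gamma(y)+\gamma(x)\gamma(y)$ together with the commutation of the ``left'' and ``right'' pieces reduces the question to the formal-series identity $\log\big((1+a)(1+b)\big)=\log(1+a)+\log(1+b)$ for commuting $a,b$, which is immediate since $1+a+b+ab=(1+a)(1+b)$. With that correction your argument is complete; the paper's $\varphi^{*\lambda}$ argument is simply a more elegant shortcut that avoids tracking these expansions.
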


\begin{proof}
Let $m$ be a positive integer. For any $x\in H^m$ we have:
$$\Delta(x)=x\otimes 1_H+1_H\otimes x+\sum_{(x)}x'\otimes x'',$$
where $x',x"\in H^{m-1}$. As a consequence, for any linear map $\alpha:H\to A$ 
such that $\alpha(1_H)=0$, we have $\alpha^{\ast n}(x)=0$ for any $n\ge m+1$. 
This applies in particular when $\alpha$ is an infinitesimal character. 
Hence the exponential of $\alpha$ is well-defined, as the sum which defines it 
is locally finite. The same argument applies to the logarithm. The fact that 
the exponential of an infinitesimal character is a character is checked by 
direct inspection, as well as the last assertion. Finally let us consider any 
character $\varphi\in G_H(A)$. The powers $\varphi^{\ast m}$ are also characters 
for any positive integer $m$. Now let us define for any $\lambda\in \K$:
\begin{equation*}
\varphi^{\ast\lambda}:=\exp\big(\lambda\log(\varphi)\big).
\end{equation*}
For any $x,y\in H$, the expression 
$\varphi^{\ast\lambda}(x)\varphi^{\ast\lambda}(y)-\varphi^{*\lambda}(xy)$ is polynomial 
in $\lambda$ and vanishes at any positive integer, hence vanishes identically. 
It follows that $\varphi^{\ast\lambda}$ is a character for any $\lambda\in \K$. 
Differentiating with respect to $\lambda$ at $\lambda=0$ immediately gives the 
infinitesimal character equation for $\log (\varphi)$. A standard direct 
computation then shows that the logarithm and the exponential are mutually 
inverse. 
\end{proof}

Suppose now that the proalgebraic group $G_H$ is represented by a graded Hopf 
algebra $H=\oplus_{n\geq 0} H_n$, such that each graded component $H_n$ is 
finite-dimensional. Then one can define the  \textsl{graded dual Hopf algebra} 
$H^*:=\bigoplus_n H_n^*$ with the operations given in section \ref{ssec:Hopf}. 
Since $H$ is commutative, its dual $H^*$ is a cocommutative Hopf algebra. 
By the Cartier-Milnor-Moore theorem, then, $H^*$ is the universal enveloping 
algebra of the Lie algebra formed by its primitive elements, 
$$
\Prim H^*=\big\{ \alpha:H \longrightarrow \K \quad\mbox{linear},\quad 
m^*(\alpha)=\alpha\otimes \varepsilon + \varepsilon\otimes \alpha \big\}. 
$$
Applying the definition of $m^*$ one sees immediately that such primitive 
elements are $\K$-valued infinitesimal characters on $H$, and that  
$\Prim H^*$ is a Lie subalgebra of the Lie algebra ${\frak g}_H(\K)$. 
In fact, ${\frak g}_H(\K)$ is the completion of $\Prim H^*$. 

\subsubsection{Lie algebra for the Fa\`a di Bruno Hopf algebra}
\label{witt-virasoro-fdb}

As we saw, the group of formal diffeomorphisms is a prounipotent group 
represented by the Fa\`a di Bruno Hopf algebra $\HFdB=\K[x_1,x_2,...]$. 
This algebra is the free commutative algebra (i.e. the symmetric algebra) 
on the vector space spanned by the graded variables $x_n$, with degree 
$|x_n|=n$. Therefore it is graded, and its graded dual space $\HFdB^*$ 
is the universal enveloping algebra of a Lie algebra that we now describe.\\

The dual Hopf algebra $\HFdB^*$ is the cofree graded cocommutative coalgebra (the 
so-called \textsl{symmetric coalgebra}) on the vector space spanned by the 
dual elements of the variables $x_n$, that we denote by $e_n$. To be more 
precise, $\HFdB^*$ is isomorphic, as a graded vector space, to the symmetric algebra $S(V)$ where $V$ is the span of linear maps 
$e_n:\HFdB \longrightarrow \K$ such that $e_n(x_m)=\delta_{nm}$. The grading is induced by the grading on $V$ given by $|e_n|=-n$.\\

Elements of $V$ are clearly infinitesimal characters of $\HFdB$, i.e. elements of 
${\frak g}_{\HFdB}(\K)$. 
The Hopf algebra structure on $\HFdB^*$ is induced by that of $\HFdB$: 
the free commutative product on $\HFdB$ gives the \textsl{unshuffle coproduct} \cite{Ree}
on $\HFdB^*$, defined by:
$$\Delta(e_{i_1}\cdots e_{i_k})=\sum_{I\amalg J=\{1,\ldots k\}}e_I\otimes e_J,$$
where $I$ and $J$ are complementary parts of $\{1,\ldots,k\}$ and $e_I$ (resp. $e_J$) stands for the symmetric product of $e_{i_j}$'s with $j\in I$ (resp. $j\in J$). The Fa\`a di Bruno coproduct on $\HFdB$ gives the 
(noncommutative) multiplication on $\HFdB^*$ (of course different from the symmetric product we used just above). It is easy to check that the generators $e_n$ 
are the only primitive elements with respect to the unshuffle coproduct, 
therefore 
$$
\Prim \HFdB^* = \Span_\K\{e_1,e_2,...\}. 
$$
On the other hand, the multiplication induced by $\DFdB$ on $\HFdB^*$ 
coincides with the convolution product when it is restricted to 
${\cal{L}}(H,\K)$. By direct computations (or see \cite{BFM} about
$\big(\HFdBnc\big)^{op}$), we then 
obtain, for any $p,q\in \N-\{0\}$:
\begin{equation}\label{witt}
[e_p,e_q]:=e_p*e_q-e_q*e_p=(q-p)e_{p+q}.
\end{equation}
This means that $L_1$ is the Lie algebra of the group $\Gdif(\K)^{op}$. 
\medskip 

Finally note that the convolution product restricted to the generators 
of $L_1$ can be written as 
$$e_p*e_q=e_p\rhd e_q+\varphi,$$ 
where $\varphi$ is any element of $L_1$ which, seen as a linear map on $\HFdB$,
vanishes on the $x_n$'s for any $n\in\N$, and where we set 
$$
e_p\rhd e_q:=(q+1)e_{p+q}.
$$ 
The binary operation $\rhd$ is a \textsl{left pre-Lie} product \cite{Gerstenhaber, Vinberg}, which means 
that it satisfies the condition 
\begin{equation}
e_p\rhd(e_q\rhd e_r)-(e_p\rhd e_q)\rhd e_r = 
e_q\rhd(e_p\rhd e_r)-(e_q\rhd e_p)\rhd e_r, 
\end{equation}
for any $p,q,r>0$. Such products are discussed in section 
\ref{sssect:HFdB-preLie}. 
Going further, there is even a family $(\rhd_\lambda)_{\lambda\in\K}$ of left 
pre-Lie products on $L_1$, given by $e_p\rhd_\lambda e_q=(p+\lambda)e_{p+q}$, 
whose antisymmetrization gives back the Lie bracket \ref{witt}. 
See L. Foissy's notes in the present volume for more details.

\subsubsection{Lie algebra for the Hopf algebra of rooted trees}\label{lie-rt}

The graded dual $\HRT^*$ of the Hopf algebra of rooted trees is called 
\textsl{Grossman-Larson Hopf algebra}, after the first authors who 
considered it \cite{GrossmanLarson}. Being a cocommutative Hopf algebra, 
it is the enveloping algebra of a Lie algebra that we now describe.\\

The Hopf algebra $\HRT$ is the free commutative algebra on the vector space 
spanned by rooted trees. It is a graded Hopf algebra, if we consider the trees 
to be graded by the number of their vertices. A basis of $\HRT$ is given 
by monomials of trees, i.e. forests $(s)$. Denote by $(\delta_s)$ the dual 
basis in $\HRT^*$. The correspondence $\delta:s\mapsto \delta_s$ extends 
linearly to a unique vector space isomorphism from $\HRT$ onto $\HRT^*$. 
For any tree $t$, the corresponding $\delta_t$ is a primitive element of 
$\HRT^*$. In particular, it is an infinitesimal character of $\HRT$, i.e. 
an element of ${\frak g}_{\HRT}(\K)$. The Lie algebra of the group $\GRT(\K)$ 
is then the vector space 
$$\mathrm{Lie}(\GRT(\K)) = \Span_\K\{\delta_t,\ t\in \Cal T \}$$
with Lie bracket 
\begin{equation*}
[\delta_t,\delta_u]:= \delta_t\ast \delta_u-\delta_u\ast \delta_t=
\delta_{t\curvearrowright u-u\curvearrowright t}, 
\end{equation*}
where $t\curvearrowright u$ is the tree obtained by grafting $t$ on $u$ 
according to the rule
\begin{equation*}
t\curvearrowright u=\sum_{T\in \Cal T} N'(t,u,T)\ T,
\end{equation*}
where $N'(t,u,T)$ is the number of partitions $\Cal V(T)=V\amalg W$ of the 
set of vertices of the tree $T$ into two subsets with $W<V$, and such that 
$T\restr{V}=v$ and $T\restr{W}=u$. Another normalization is often employed 
to describe this Lie algebra. 
Consider in $\HRT^*$ the normalized dual basis
$\wt\delta_t=\sigma(t)\delta_t$, where $\sigma(t)=|\mop{Aut}t|$ stands for the
symmetry factor of $t$. Then 
$$\mathrm{Lie}(\GRT(\K)) \cong \Span_\K(\wt\delta_t,\ t\in \Cal T )$$
with Lie bracket 
\begin{equation*}
[\wt\delta_t,\wt\delta_u]:= 
\wt\delta_t\ast \wt\delta_u-\wt\delta_u\ast \wt\delta_t=\wt\delta_{t\to u-u\to t},
\end{equation*}
given in terms of the grafting operation  
\begin{equation*}
t\to u=\sum_{T \in \Cal T} M'(t,u,T)\ T,
\end{equation*}
where the factor 
$M'(t,u,T)=\displaystyle\frac{\sigma(t)\sigma(u)}{\sigma(T)}N'(t,u,T)$
can be interpreted as the number of ways to graft the tree $t$ on the tree $u$
in order to get the tree $T$. Otherwise said, $t\to u$ is the sum of the 
trees obtained by grafting $t$ on $u$ at any vertex $v$:
\begin{equation*}
t\to u=\sum_{v\in\Cal V(u)}t\to_v u.
\end{equation*}

The interesting aspect of the last presentation of $\mathrm{Lie}(\GRT(\K))$ 
is that the binary operation $\to$ on trees is a left pre-Lie product, i.e., 
for any trees $s,t,u$, we have:
\begin{equation*}
s\to(t\to u)-(s\to t)\to u=t\to(s\to u)-(t\to s)\to u. 
\end{equation*}
This identity is easily checked by remarking that the left-hand side is 
obtained by summing up all the trees obtained by grafting $s$ and $t$ on two 
(distinct or equal) vertices of $u$. This procedure is obviously symmetric 
in $s$ and $t$. Moreover, it has been shown by F. Chapoton and M. Livernet 
\cite{ChapotonLivernet} that the vector space spanned by the rooted trees 
$\Cal T$, endowed with the binary product $\to$, is in fact the 
\textsl{free pre-Lie algebra with one generator}. 
\medskip 

Let us consider the pre-Lie algebra $(L_1,\rhd)$ with basis $(e_n)_{n\ge 1}$ 
introduced in Paragraph \ref{witt-virasoro-fdb}. Using the universal property of the 
free pre-Lie algebra, there is a unique pre-Lie algebra morphism 
$$\phi:(\Cal T,\to)\longrightarrow (\Cal W,\rhd)$$
such that $\phi(\racine)=e_1$. It is a fortiori a Lie algebra morphism,  
and it is obviously surjective: indeed $e_n$ is the image of 
$\frac{1}{n!} \ell_n$, where $\ell_n$ is the ladder tree with $n$ vertices. 
It extends to a surjective Hopf algebra morphism between the corresponding 
enveloping algebras. Finally, the transposed map gives rise to the Hopf 
algebra embedding (\ref{fdb-bck}), namely $\Psi:\HFdB\inj 6 \HRT$,  
of section \ref{sssect:bck}. 

\section{Combinatorial interpretation}
\label{Section6}
\subsection{Incidence Hopf algebras}
Incidence Hopf algebras are Hopf algebras built from suitable families of 
partially ordered sets. They have been elaborated by W. R. Schmitt in 1994 
\cite{Schmitt}, following the track opened by S. A. Joni and G.-C. Rota when 
they defined incidence algebras and coalgebras (\cite{Rota, JoniRota}, see also \cite{Stanley}). They form a large family 
of Hopf algebras, which includes those on trees and the Fa\`a di Bruno one. 
We quickly describe here the subfamily of ``standard reduced'' incidence 
Hopf algebras, which are always commutative.
\medskip 

A \textsl{poset} is a partially ordered set $P$, with order relation denoted 
by $\le$ For any $x,y\in P$, the \textsl{interval} $[x,y]$ is the subset of $P$ formed by the elements $z$ such that $x\le z\le y$. Let $\P$ be a family of finite posets $P$ such that there exists 
a unique minimal element $0_P$ and a unique maximal element $1_P$ in $P$ (hence $P$ coincides with the interval $P=[0_P,1_P]$). 
This family is called \textsl{interval closed} if for any poset $P\in\P$ 
and for any $x\le y\in P$, the interval $[x,y]$ is an element of $\P$. 
Let $\overline{\P}$ be the quotient $\P/\sim$, where $P\sim Q$ if and 
only if $P$ and $Q$ are isomorphic as posets\footnote{W. Schmitt allows more 
general equivalence relations, called \textsl{order-compatible relations}.}. 
The equivalence class of any poset $P\in\P$ is denoted by $\overline P$ 
(notation borrowed from \cite{Ehrenborg}). 
The \textsl{standard reduced incidence coalgebra} of the family of posets 
$\P$ is the $\K$-vector space freely generated by $\overline{\P}$, 
with coproduct given by 
\begin{equation*}
\Delta[P]=\sum_{x\in P}\overline{[0_P,x]}\otimes \overline{[x,1_P]}, 
\end{equation*}
and counit given by $\varepsilon(\overline{\{*\}})=1$ and 
$\varepsilon(\overline P)=0$ if $P$ contains two elements or more.
\medskip 

Given two posets $P$ and $Q$, the \textsl{direct product} $P\times Q$ is the 
set-theoretic cartesian product of the two posets, with partial order given 
by $(p,q)\leq (p',q')$ if and only if $p\leq p'$ and $q\leq q'$. 
A family of finite posets $\P$ is called \textsl{hereditary} if the 
product $P\times Q$ belongs to $\P$ whenever $P,Q\in \P$. 
The quotient $\overline{\P}$ is then a commutative semigroup generated by 
the set $\overline{\P_0}$ of classes of \textsl{indecomposable posets}, 
i.e. posets $R\in\P$ such that for any $P,Q\in\P$ of cardinality 
$\ge 2$, $P\times Q$ is not isomorphic to $R$. The commutativity comes from 
the obvious isomorphism $P\times Q\sim Q\times P$ for any $P,Q\in \P$. 
The unit element $\un$ is the class of any poset with only one element.

\begin{prop}{\cite[Theorem 4.1]{Schmitt}}
If $\P$ is a hereditary family of finite posets, the standard reduced 
coalgebra $\Cal H(\P)$ of $\P$ is a Hopf algebra.
\end{prop}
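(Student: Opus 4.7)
The plan is to verify in turn (i) that the multiplication inherited from the semigroup structure on $\overline{\P}$ is well-defined on the vector space generated by $\overline{\P}$, (ii) that this multiplication turns the coalgebra $\Cal H(\P)$ into a bialgebra, and (iii) that the resulting bialgebra is connected filtered, whence it automatically admits an antipode.

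For (i), since $\P$ is hereditary, $P\times Q\in\P$ whenever $P,Q\in\P$, so the class $\overline{P\times Q}$ is in $\overline{\P}$. The operation is associative and commutative by the corresponding properties of the poset direct product, with neutral element $\un$ (the class of any singleton). For coassociativity of $\Delta$ (already part of the coalgebra structure), iterating the coproduct produces, on both sides, the triple sum
\begin{equation*}
\sum_{0_P\le y\le x\le 1_P}\overline{[0_P,y]}\otimes \overline{[y,x]}\otimes \overline{[x,1_P]},
\end{equation*}
since subintervals of $P$ are themselves intervals of $P$.

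For (ii), the crucial identification is that for any element $(x,y)\in P\times Q$ the intervals in the direct product factor as $[0_{P\times Q},(x,y)]=[0_P,x]\times[0_Q,y]$ and $[(x,y),1_{P\times Q}]=[x,1_P]\times[y,1_Q]$, a straightforward check from the definition of the componentwise partial order. Applying this, one computes
\begin{equation*}
\Delta(\overline{P\times Q})=\sum_{x\in P,\,y\in Q}\overline{[0_P,x]}\cdot\overline{[0_Q,y]}\otimes \overline{[x,1_P]}\cdot\overline{[y,1_Q]}=\Delta(\overline P)\cdot\Delta(\overline Q),
\end{equation*}
using the definition of the product on $\Cal H(\P)\otimes \Cal H(\P)$. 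The counit is multiplicative as well: if $P$ and $Q$ are both singletons so is $P\times Q$, giving $\varepsilon(\overline{P\times Q})=1=\varepsilon(\overline P)\varepsilon(\overline Q)$, while if one of them has cardinality at least two then so does $P\times Q$, giving $0$ on both sides.

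For (iii), define the filtration by the length $\ell(P)$ of a maximal chain from $0_P$ to $1_P$, and set $\Cal H(\P)^{(n)}=\Span\{\overline P:\ell(P)\le n\}$. One has $\ell(P)=0$ if and only if $P$ is a singleton, so $\Cal H(\P)^{(0)}=\K\cdot\un$, and concatenating a chain in $[0_P,x]$ with a chain in $[x,1_P]$ yields a chain in $P$, so $\ell([0_P,x])+\ell([x,1_P])\le\ell(P)$; likewise $\ell(P\times Q)=\ell(P)+\ell(Q)$. This exhibits $\Cal H(\P)$ as a connected filtered bialgebra, and by the standard recursion recalled in Section \ref{ssec:Hopf} the antipode comes for free. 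The main subtlety is really the product-of-intervals identity in step (ii); once this is in hand, the rest is bookkeeping.
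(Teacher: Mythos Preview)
Your proof is correct and follows the same overall strategy as the paper: establish the bialgebra structure from the factorisation of intervals in a direct product, then obtain the antipode from a connectedness/conilpotence argument. You spell out step (ii) more explicitly than the paper does (the paper simply declares the compatibility ``obvious''), which is fine.

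The one genuine difference is in step (iii). The paper filters by the \emph{cardinality} of $P$: if $|P|=n$ and $x\neq 0_P,1_P$, then both $[0_P,x]$ and $[x,1_P]$ have strictly fewer than $n$ elements, so the iterated reduced coproduct $\wt\Delta^m(\overline P)$ vanishes for $m>n$, and this conilpotence suffices to invert the identity in the convolution algebra. Your choice of chain length $\ell(P)$ is a slight improvement: since $\ell(P\times Q)=\ell(P)+\ell(Q)$, your filtration is actually compatible with the multiplication (whereas cardinality is multiplicative, not additive, under direct product), so you obtain a genuine connected filtered bialgebra in the sense recalled earlier in the paper, rather than merely a conilpotent one. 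Both routes yield the antipode; yours gives a bit more structure for free.
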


\begin{proof}
The semigroup product extends bilinearly to an associative product on 
$\Cal H(\P)$. The compatibility of the coproduct with this product is 
obvious. The unit is a coalgebra morphism and the counit is an algebra 
morphism. The existence of the antipode comes from the fact that for any 
poset $P\in\P$ of cardinal, say, $n$, we obviously have:
$$
\Delta(\overline P)=\overline P\otimes\un+\un\otimes \overline P
   +\sum_{(\overline P)}\overline{P'}\otimes\overline{P"},
$$
where $P'$ and $P"$ contain strictly less than $n$ elements (note that the 
fact that $P$ is the interval $[0_P,1_P]$ is crucial here). Considering the reduced coproduct $\wt\Delta(\overline P)=\Delta(\overline P)-\overline P\otimes\un-\un\otimes \overline P$, the iterated reduced
coproduct $\wt{\Delta}^m(\overline P)$ therefore vanishes for $m>n$. 
It is well-known (see e.g. \cite{Manchon08}) that this conilpotence property 
allows us to define the convolution inverse of the identity, and even of any 
linear map $\varphi:\Cal H(\P)\to\Cal H(\P)$ with $\varphi(\un)=\un$.
\end{proof}

Many of the Hopf algebras encountered so far are incidence Hopf algebras. 
We give three examples, all of them borrowed from \cite{Schmitt}.

\subsubsection{The binomial and the divided power Hopf algebras}
Let $\Cal B$ be the family of finite boolean algebras. An element of $\Cal B$ 
is any poset isomorphic to the set $\P(A)$ of all subsets of a finite set 
$A$. The partial order on $\P(A)$ is given by the inclusion. 
If $B$ and $C$ are two subsets of $A$ with $B\subset C$, the interval $[B,C]$ 
in $\P(A)$ is isomorphic to $\P(C\backslash B)$, hence $\Cal B$ is 
interval-closed. Moreover the obvious property:
\begin{equation}
\P(A)\times\P(B)\sim \P(A\amalg B)
\end{equation}
implies that $\Cal B$ is hereditary. The incidence Hopf algebra 
$\Cal H(\Cal B)$ is the so-called \textsl{binomial Hopf algebra}, because 
of the expression of the coproduct on generators. In fact, as a vector space 
$\Cal H(\Cal B)$ is clearly spanned by $\{x_0,x_1,x_2,\ldots \}$, where $x_n$ 
stands for the isomorphism class of $\P(\{1,\ldots,n\})$. The product is 
obviously given by $x_mx_n=x_{m+n}$, the unit is $\un=x_0$, the counit is given 
by $\varepsilon(x_n)=0$ for $n\ge 1$, and the coproduct is entirely defined by 
$\Delta(x_1)=x_1\otimes \un+\un\otimes x_1$. Explicitly we have:
$$\Delta(x_n)=\sum_{k=0}^n{n\choose k}x_k\otimes x_{n-k}.$$

The graded dual Hopf algebra $\Cal H(\Cal B)^*$ is known as the 
\textsl{divided power algebra}: it can be represented as the vector space 
$\K\{y_0,y_1,y_2,\ldots \}$ with multiplication 
$$y_m\star y_n = {m+n\choose m}\ y_{m+n}$$
and unit $\un=y_0$. The counit is $\varepsilon(y_n)=0$ if $n>0$, and the 
coproduct is given by 
$$\D(y_n) = \sum_{p+q=n} y_p\otimes y_q.$$ 

\subsubsection{The Fa\`a di Bruno Hopf algebra}
Let $\Cal {SP}$ be the family of posets isomorphic to the set $\Cal {SP}(A)$ 
of all partitions of some nonempty finite set $A$. The partial order on set partitions 
is given by refinement. We denote by $0_A$ or $0$ the partition by singletons, 
and by $1_A$ or $1$ the partition with only one block. Let $\Cal Q$ be the 
family of posets isomorphic to the cartesian product of a finite number of 
elements in $\Cal {SP}$. If $S$ and $T$ are two partitions of a finite set $A$ 
with $S\le T$ (i.e. $S$ is finer than $T$), the partition $S$ restricts to a 
partition of any block of $T$. Denoting by $W/S$ the set of those blocks of 
$S$ which are included in some block $W$ of $T$, any partition $U$ such that 
$S\le U\le T$ yields a partition of the set $W/S$ for any block $W$ of $T$. 
This in turn yields the following obvious poset isomorphism:
\begin{equation}\label{partitions}
[S,T]\sim\prod_{W\in A/T} \Cal {SP}(W/S).
\end{equation}
This shows that $\Cal Q$ is interval closed (and hereditary by definition).

\begin{prop}{\cite[Example 14.1]{Schmitt}}
The incidence Hopf algebra $\Cal H(\Cal Q)$ is isomorphic to the Fa\`a di 
Bruno Hopf algebra.
\end{prop}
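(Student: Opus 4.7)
The strategy is to produce an explicit isomorphism $\Psi:\HFdB\longrightarrow\Cal H(\Cal Q)$ by suitably rescaling the generators and matching the coproducts term-by-term. First I would identify the algebra structure of $\Cal H(\Cal Q)$. Since $\Cal Q$ is the family of finite products of partition lattices $\Cal{SP}(A)$, and since the partition lattice $\Cal{SP}(A)$ depends up to isomorphism only on $n=|A|$, the semigroup $\overline{\Cal Q}$ is freely generated by the classes $y_n:=\overline{\Cal{SP}(\{1,\ldots,n\})}$ for $n\ge 2$ (with $y_1=\un$ corresponding to the one-element poset). Thus $\Cal H(\Cal Q)=\K[y_2,y_3,\ldots]$ as a commutative algebra, with the same number of generators in each degree as $\HFdB=\K[x_1,x_2,\ldots]$ once we declare $|y_n|=n-1$.

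Next I would compute $\Delta(y_{n+1})$ using the incidence coproduct and the poset isomorphism \eqref{partitions}. For a partition $\pi$ of $\{1,\ldots,n+1\}$, setting $S=0$ and $T=\pi$ gives $[0,\pi]\cong\prod_{W\in\pi}\Cal{SP}(W)$, while setting $S=\pi$ and $T=1$ gives $[\pi,1]\cong\Cal{SP}(\{1,\ldots,n+1\}/\pi)$. Hence
\begin{equation*}
\Delta(y_{n+1})=\sum_{\pi}\Big(\prod_{W\in\pi}y_{|W|}\Big)\otimes y_{|\pi|}.
\end{equation*}
Grouping partitions by their block-size profile $(j_1,j_2,\ldots)$ (with $\sum_i i\,j_i=n+1$ and $\sum_i j_i=m+1$) and using the standard count $\tfrac{(n+1)!}{\prod_i(i!)^{j_i}j_i!}$ of partitions with a prescribed profile, the coproduct takes the form
\begin{equation*}
\Delta(y_{n+1})=\sum_{m=0}^{n}\ \sum_{\substack{j_1+2j_2+\cdots=n+1\\ j_1+j_2+\cdots=m+1}}\frac{(n+1)!}{\prod_i(i!)^{j_i}j_i!}\prod_i y_i^{j_i}\otimes y_{m+1}.
\end{equation*}

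Now I would introduce the rescaling $\Psi(x_n):=\frac{1}{(n+1)!}\,y_{n+1}$ for $n\ge 0$ (consistent with $x_0=\un=y_1$), and extend it multiplicatively. Since it sends generators to generators up to nonzero scalars, $\Psi$ is clearly a graded algebra isomorphism. To check that $\Psi$ intertwines coproducts, I would substitute $y_{i+1}=(i+1)!\,\Psi(x_i)$ into the formula above, reindex by $k_{i-1}=j_i$, and watch the crucial cancellation: the factor $\prod_i (i!)^{j_i}$ arising from $y_i^{j_i}=((i-1)!\cdot i\cdots)^{j_i}$ is exactly killed by the $\prod_i(i!)^{j_i}$ in the denominator. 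The surviving coefficient $\frac{(m+1)!}{k_0!k_1!\cdots}$ matches the Fa\`a di Bruno coproduct \eqref{doubilet} on the nose, completing the verification.

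The only genuine difficulty is combinatorial bookkeeping: finding the right normalization factor $1/(n+1)!$ and verifying that the factorials $(i!)^{j_i}$ from the product over blocks cancel perfectly against those in the partition-counting denominator. Everything else—commutativity of the diagram, preservation of unit, counit, and grading—is formal, and the antipode is automatic since both Hopf algebras are connected.
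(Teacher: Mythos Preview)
Your proof is correct and follows essentially the same route as the paper: both compute the incidence coproduct on the class of $\Cal{SP}(\{1,\ldots,n+1\})$ via the interval isomorphism \eqref{partitions}, group partitions by block-size profile, and identify the result with the Fa\`a di Bruno coproduct \eqref{doubilet} under the rescaling $x_n\mapsto\frac{1}{(n+1)!}\,\overline{\Cal{SP}(\{1,\ldots,n+1\})}$. The only cosmetic difference is that the paper packages the partition count as a Bell polynomial $B_{m+1,n+1}$, whereas you write out the multinomial factor $\frac{(n+1)!}{\prod_i(i!)^{j_i}j_i!}$ explicitly.
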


\begin{proof}
Denote by $X_n$ the isomorphism class of $\Cal{SP}(\{1,\ldots, n+1\})$. Note that $X_0$ is the unit of the Hopf algebra. In view of \eqref{partitions}, we have:
\begin{eqnarray}
\Delta(X_n) &=& \sum_{S\in\Cal{SP}(\{1,\ldots, n+1\})} 
   \overline{[0,S]}\otimes\overline{[S,1]}\nonumber\\
&=& \sum_{S\in\Cal{SP}(\{1,\ldots, n+1\})} 
   \left(\prod_{W\in \{1,\ldots , n+1\}/S}\overline{\Cal{SP}(W)}\right) \otimes 
   \overline{\Cal{SP}(\{1,\ldots, n+1\}/S)}\label{fdbsp}.
\end{eqnarray}
The coefficient in front of $X_1^{k_1}\cdots X_n^{k_n}\otimes X_m$ in \eqref{fdbsp} is equal to the number of partitions of $\{1,\cdots ,n+1\}$ with $k_j$ blocks of size $j+1$ (for $j=1$ to $n$), $m+1$ blocks altogether, and $k_0=m+1-k_1-\cdots -k_n$ blocks of size $1$. By definition of the Bell polynomials in terms of partitions, we have then:
\begin{equation}
\Delta(X_n)=\sum_{m=0}^n B_{m+1,n+1}(X_0,X_1,X_2,\ldots)\otimes X_m,
\end{equation}
 which in turn gives \eqref{doubilet} with $x_j:=\frac{1}{(j+1)!}X_j$.
\end{proof}

\subsubsection{The Hopf algebra of rooted trees}
Let $P$ be a finite poset not assumed tobe isomorphic to an interval. As an example, we can take as poset $P$ the vertex set 
$\Cal V(F)$ of a rooted forest $F$, in which $v\le w$ if and only if there is 
a path from one root to $w$ through $v$. An \textsl{order ideal} (or 
\textsl{initial segment}) in $P$ is a subset $I$ of $P$ such that for any 
$w\in I$, if $v\le w$, then $v\in I$. For a rooted forest $F$, an initial 
segment in $\Cal V(F)$ is a subforest such that any connected component of it 
contains a root of $F$. For any finite poset $P$, we denote by $J(P)$ the 
poset of all initial segments of $P$, ordered by inclusion 
\cite[Paragraph 16]{Schmitt}. 
The minimal element $0_{J(P)}$ is the empty set, and the maximal element 
$1_{J(P)}$ is $P$. For two finite posets $P$ and $Q$ one obviously has:
\begin{equation}\label{disj-union}
J(P\amalg Q)\sim J(P)\times J(Q).
\end{equation}
The isomrphism class of a poset $P$ is uniquely determined by the isomorphism 
class of the poset $J(P)$. To see this, consider two posets $P$ and $Q$, 
and suppose there is an isomorphism $\Phi:J(P)\to J(Q)$. For any $x\in P$, 
consider the initial segment $P_{\le x}:=\{y\in P,\  y\le x\}$. 
It has $x$ as unique maximal element. Now, $\Phi(P_{\le x})$ has a unique 
maximal element which we denote by $\varphi(x)$, and it is not hard to see 
that the map $\varphi:P\to Q$ thus constructed is a poset isomorphism.
\medskip 

For a poset $P$ and two initial segments $I_1\subset I_2\subset P$ with $I_1$ fixed, the 
correspondence $I_2\mapsto I_2\backslash I_1$ defines a poset isomorphism:
\begin{equation}\label{iso-initial}
[I_1,I_2]\subset J(P) \longrightarrow J(I_2\backslash I_1).
\end{equation}
Differences $Q=I_2\backslash I_1$ are \textsl{convex subsets} of $P$, i.e. 
such that for any $x,y\in Q$, we have $[x,y]\le Q$. Conversely, any convex 
subset $Q\subset P$ can be written as a difference $P_{\le Q}\backslash P_{<Q}$ 
of two unique initial segments:
\begin{eqnarray*}
P_{\le Q}&:=&\{x\in P,\ \exists y\in Q,\ x\le y\},\\
P_{< Q}&:=&\{x\in P,\ \forall y\in Q,\ x< y\}.
\end{eqnarray*}
Now let $\Cal F$ be a family of finite posets which is closed by disjoint 
unions and such that for any poset $P\in\Cal F$, convex subsets of $P$ also 
belong to $\Cal F$. Then the corresponding family:
$$J(\Cal F):=\{J(P),\, P\in\Cal F\}$$
is hereditary by virtue of isomorphisms \eqref{disj-union} and 
\eqref{iso-initial}.

\begin{prop}
The family $\Cal F$ of rooted forests is stable by taking disjoint unions and 
convex subposets, and the associated incidence Hopf algebra 
$\Cal H\big(J(\Cal F)\big)$ is isomorphic to $\HRT$.
\end{prop}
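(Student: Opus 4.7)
The plan is to verify the closure properties of $\Cal F$, and then construct a Hopf algebra isomorphism $\Phi:\HRT\to\Cal H(J(\Cal F))$ sending each rooted tree $T$ to the class $\overline{J(T)}$. Closure of $\Cal F$ under disjoint unions is immediate. For convex subposets, recall that the defining feature of a rooted forest, viewed as a poset, is that every principal order ideal $\{w\le v\}$ is a chain; this property is inherited by every induced subposet, in particular by every convex subposet $Q\subset P$, the roots of $Q$ being then its minimal elements.

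To build $\Phi$ at the level of algebras, note that by \eqref{disj-union} one has $\overline{J(T_1\amalg\cdots\amalg T_k)}=\overline{J(T_1)}\cdots\overline{J(T_k)}$ in $\Cal H(J(\Cal F))$, so the classes $\overline{J(T)}$ with $T$ a rooted tree generate the algebra. The observation recalled in the excerpt that $P$ is determined up to isomorphism by $J(P)$ yields algebraic independence: any relation $\overline{J(T_1)}\cdots\overline{J(T_k)}=\overline{J(T'_1)}\cdots\overline{J(T'_\ell)}$ forces an isomorphism of forests $T_1\amalg\cdots\amalg T_k\sim T'_1\amalg\cdots\amalg T'_\ell$, and two disjoint unions of rooted trees are isomorphic if and only if the multisets of their connected components coincide. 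Extending $\Phi$ multiplicatively therefore yields an algebra isomorphism.

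For the coproduct, the incidence formula together with \eqref{iso-initial} yield
$$\Delta\overline{J(P)}=\sum_{I\in J(P)}\overline{J(I)}\otimes\overline{J(P\setminus I)},$$
summed over initial segments $I$ of $P$. On the Connes--Kreimer side, the admissibility condition $W<V$ in $\DRT$ is equivalent to $W$ being downward closed: if $y\in W$ and $x<y$ with $x\notin W$, the pair $(x,y)$ would violate $W<V$. Hence admissible cuts are precisely the pairs $(V,W)=(P\setminus I,I)$ with $I\in J(P)$. Moreover, downward closure of $I$ prevents any element of $I$ from lying strictly between two elements of $P\setminus I$, so no new covering relation appears and the Hasse-diagram restrictions $u|_V,u|_W$ coincide with the poset restrictions $P|_V,P|_W$. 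The two coproducts therefore agree up to the flip of tensor factors; since $\HRT$ is commutative, its antipode is involutive and provides an isomorphism $\HRT\to\HRT^{\mathrm{cop}}$, so composing $\Phi$ with $S$ produces a genuine Hopf algebra isomorphism. The counits match trivially.

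The main obstacle is purely combinatorial: checking that the Connes--Kreimer notion of admissible cut coincides with that of an initial segment in the underlying poset, and that the cut-out subforests agree with the poset restrictions. Once this is established, everything else (compatibility with the product, counit, and antipode) is forced, because both Hopf algebras are connected graded commutative, so the antipode comes for free by recursion.
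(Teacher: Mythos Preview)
Your proof is correct and follows essentially the same route as the paper: identify the class $\overline{J(P)}$ with the forest $P$, use \eqref{disj-union} for the product, \eqref{iso-initial} for the coproduct, and observe that admissible cuts $W<V$ are exactly the decompositions with $W=I$ an initial segment. You are simply more explicit than the paper on several points it leaves implicit---the closure of $\Cal F$ under convex subposets (via the ``principal ideals are chains'' characterisation), the algebraic independence of the $\overline{J(T)}$ (via the reconstruction of $P$ from $J(P)$), and the handling of the tensor flip by composing with the antipode (the paper just says ``modulo flipping the terms'').
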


\begin{proof}
Via the isomorphism $\Phi$ defined above, the vector space 
$\Cal H\big(J(\Cal F)\big)$ can be identified with the vector space freely 
generated be the rooted forests. By \eqref{disj-union}, the product is then 
given by disjoint union, and the coproduct writes:
$$\Delta(P)=\sum_{I\in J(P)}(P\backslash I)\otimes I ,$$
which is just the coproduct \eqref{coprod} modulo flipping the terms 
(we have denoted by the same letter a forest and its underlying poset). 
The counit is given by $\varepsilon(\un)=1$ and $\varepsilon(P)=0$ for any 
nontrivial forest $P$. 
\end{proof}
\medskip 

This example can be extended, still in the context of incidence Hopf algebras, 
to oriented cycle-free graphs \cite{Manchon}. Hopf algebras of Feynman graphs 
do not enter strictly speaking into this framework (see however 
\cite[Paragraph 14]{Schmitt} about Hopf algebras of simple 3-connected graphs).
For other examples of incidence coalgebras and Hopf algebras, see 
\cite{JoniRota,Schmitt}.


\subsection{Combinatorial Hopf algebras}

The Fa\`a di Bruno Hopf algebra has some important properties also satisfied 
by almost all examples of graded Hopf algebras we have encountered so far 
(see \cite{JoniRota}). These properties are:
\begin{itemize}
\item
to be connected,
\item 
to be free or free commutative (or, dually, cofree or cofree cocommutative),
\item
to have an explicit basis,
\item
to have the ``right-sided property'':
\begin{equation}\label{rsp}
\D(x_n)=\sum\mbox{(polynomial in $x_k$)} \otimes  x_m 
\end{equation}
or its left analogue. 
\end{itemize} 
\smallskip
Hopf algebras of this type have been called \textsl{combinatorial Hopf 
algebras}\footnote{A different definition of combinatorial Hopf algebras was  
proposed, for commutative ones, by F. Hivert, J.-C. Novelli and J.-Y. Thibon 
in \cite{HNT}.} by J.-L. Loday and M. Ronco in \cite{LR2}. 
In particular, any combinatorial Hopf algebra fits into one of the four types 
below:
\bigskip 

{
\hskip 3cm
\framebox[3cm]{\quad \begin{minipage}{2.8cm}\smallskip 

$\big(T^a(V),\,\otimes,\,\D$?) \\ 
\medskip 

$\big(S^a(V),\,\cdot,\,\D?\big)$
\smallskip 
\end{minipage}}
\quad
\parbox{4.2cm}{$\underset{\mbox{graded Hopf algebra duality}}{\Longleftrightarrow}$}
\quad
\framebox[3cm]{\quad \begin{minipage}{2.8cm}\smallskip 
$\big(T^c(V),\,\D_{\otimes},\,\ast?\big)$ \\ 
\medskip 

$\big(S^c(V),\,\D_{\shu},\, \ast? \big)$
\smallskip 
\end{minipage}}
\bigskip 

\hspace{3.7cm} free type \hskip6.6cm cofree type
}
\bigskip\goodbreak

Let us explain the diagram above: here, $V$ is a graded vector space with finite-dimensional homogeneous components, $T^a(V)$ stands for the free algebra 
on $V$ endowed with the tensor product $\otimes$ (also called 
\textsl{concatenation} on words), and $T^c(V)$ stands for the graded cofree coalgebra 
on $V$ endowed with the \textsl{deconcatenation coproduct} $\D_{\otimes}$ 
defined by:
$$\D_{\otimes}(v_1\otimes\cdots\otimes v_n) = \sum_{k=0}^n 
(v_1\otimes\cdots\otimes v_k)\bigotimes(v_{k+1}\otimes\cdots\otimes v_n).$$
It can be seen as the graded dual of $T^a(V^*)$, where $V^*$ is the graded dual of $V$. In the same spirit, $S^a(V)$ stands for the free commutative algebra on $V$ 
endowed with the symmetrized tensor product $\cdot$, and $S^c(V)$ stands 
for the the graded cofree cocommutative coalgebra endowed with the 
unshuffle coproduct defined by: 
$$\D_{\shu}(v)=v\otimes 1+1\otimes v$$ 
for any $v\in V$, and extended multiplicatively (see \eqref{unshuffle} below). 
The operations $\D?$ and $*?$ stand for a priori unknown coproducts and 
products which the give rise to Hopf algebras of the desired type. 
\medskip 

J.-L. Loday and M. Ronco classified combinatorial Hopf algebras in \cite{LR2}. 
The simplest examples are the coordinate rings of the pro-algebraic groups 
$\Ginv$ and $\Gdif$ and their graded duals, which are of the symmetric type, 
and their non-symmetric lifts. 
In particular, their results imply the existence of a brace bracket on 
the enveloping algebras of $L_1$ and $\Cal W_+$, which reproduces the 
pre-Lie bracket already mentioned in section \ref{witt-virasoro-fdb} on the 
Lie algebras. We briefly discuss these examples. 

\subsubsection{The non-commutative Fa\`a di Bruno Hopf algebra and the brace 
bracket}
\label{sssect:brace}
The simplest examples of combinatorial Hopf algebras are the non-commuta\-tive 
lifts of the coordinate rings of the pro-algebraic groups $\Ginv$ and $\Gdif$, 
and their graded duals. In the case of the group $\Ginv$, we obtain two 
combinatorial Hopf algebras: 
\begin{align*}
& T^a(V)=\Hinvnc=\K\langle x_1,x_2,x_3,...\rangle, 
\quad\mbox{with product $\otimes$ and coproduct $\Dinv$}, \\ 
& T^c(V)=(\Hinvnc)^*=\K\langle x_1,x_2,x_3,...\rangle, 
\quad\mbox{with coproduct $^t\otimes=\D_{\otimes}$ and product $\ast=\Dinv^*$}.  
\end{align*}
We have already encountered the Hopf algebra $\Hinvnc$ in section 
\ref{ssect:HFdBnc}: it is endowed with the same coproduct $\Dinv$ as $\Hinv$, 
and it is therefore cocommutative. 
Dually, the operation $\ast=\Dinv^*$ is the commutative product: 
$$x_n\ast x_m = x_{n+m},$$
and $(\Hinvnc)^*$ is then isomorphic to the tensor coalgebra on the polynomial 
ring $\K[T]$ in one variable $T=x_1$, with isomorphism given by 
$x_n\mapsto T^n$. In the case of the group $\Gdif$, we obtain two combinatorial 
Hopf algebras: 
\begin{align*}
& T^a(V)=\HFdBnc=\K\langle x_1,x_2,x_3,...\rangle, 
\quad\mbox{with product $\otimes$ and coproduct $\DFdB$}, \\ 
& T^c(V)=(\HFdBnc)^*=\K\langle e_1,e_2,e_3,...\rangle, 
\quad\mbox{with coproduct $^t\otimes=\D_{\otimes}$ and product $\ast=\DFdB^*$}. 
\end{align*}
The Hopf algebra $\HFdBnc$, and its mysterious nature, has already been 
discussed in sections \ref{ssect:HFdBnc} and \ref{ssect:HFdBnc-question}. 
For its dual Hopf algebra $(\HFdBnc)^*$, we shall refer to the general 
classification theorem established by Loday and Ronco: if $H\cong T^c(V)$ 
for some graded vector space $V$, with cofree coassociative coproduct 
$\D_\otimes$ and product $\ast$ satisfying the dual of the right-sided 
property \eqref{rsp}, then \ignore{$H$ is a \textsl{dendriform algebra}  
\cite[Proposition 3.7]{LR2} and }$V$ is a \textsl{(right) brace algebra} \cite{LM}, 
i.e. is equipped with a bilinear map $\{\ ;\ \}:V\otimes T(V)\longrightarrow V$, 
called \textsl{(right) brace product}, such that
\begin{align}
\label{brace}
\{\{x;y_1\cdots y_p\};z_1\cdots z_q\} &
= \sum \{x;z_1\cdots  z_{i_1}\{y_1;z_{i_1+1}\cdots z_{j_1}\} 
z_{j_1+1}\cdots z_{i_p}\{y_p;z_{i_p+1}\cdots z_{j_p}\}z_{j_p+1}\cdots z_q\},
\end{align}
where the sum runs over all sequences 
$0\le i_1\le j_1\le\cdots\le i_p\le j_p\le q$. 
In the case $T^c(V)=(\HFdBnc)^*$, the right-sided property is fulfilled by 
the opposite algebra $(\HFdBncop)^*$, with product ${\DFdBop}^*$: in this case 
the brace product on the vector space $V=\Span_\K\{x_1,x_2,x_3,...\}$ has been 
computed in \cite{BFM}. 
If we omit the symbol $\otimes$ of the tensor product in the monomials, 
the brace product reads: 
$$
\{x_n;x_{m_1}\cdots x_{m_q}\} = \binom{n+1}{q} x_{n+m_1 +\cdots+m_q}.  
$$
For the algebra $T^c(V)=(\HFdBnc)^*$, which satisfies the left-sided property, 
one gets a \textsl{left brace product} 
$\{\ ;\ \}:T(V)\otimes V\longrightarrow V$ satisfying the obvious analogue 
properties of \eqref{brace}. 
\ignore{The dendriform structure $\ast$ on $(\HFdBncop)^*$ or $(\HFdBnc)^*$ can then be 
reconstructed from the brace bracket with a recursive formula \cite{LR2,BFM}.}

\subsubsection{The Fa\`a di Bruno Hopf algebra and the pre-Lie bracket}
\label{sssect:HFdB-preLie}

The symmetric versions of the two previous examples of combinatorial Hopf 
algebras are the coordinate rings of the pro-algebraic groups $\Ginv$ and 
$\Gdif$ (or $\Gdifop$), and their graded duals. 
In the case of the group $\Ginv$, we get the two combinatorial Hopf algebras
\begin{align*}
& S^a(V)=\Hinv=\K[x_1,x_2,x_3,...], \quad\mbox{with free commutative product 
$\cdot$ and coproduct $\Dinv$}, \\ 
& S^c(V)=\Hinv^*=\K\langle x_1,x_2,x_3,...\rangle, 
\quad\mbox{with coproduct $\D_{\shu}$ and product $\ast=\Dinv^*$}. 
\end{align*}
The unshuffle coproduct is defined on a monomial 
$x_{n_1}\cdots x_{n_k}\in \K\langle x_1,x_2,x_3,...\rangle$ (where we omit the 
tensor product $\otimes$ for simplicity) in the following way: 
\begin{align}
\label{unshuffle}
\D_{\shu}(x_{n_1}\cdots x_{n_k}) & = \sum_{p+q=k} \sum_{\sigma\in Sh(p,q)} 
x_{n_{\sigma(1)}}\cdots x_{n_{\sigma(p)}} \otimes x_{n_{\sigma(p+1)}}\cdots x_{n_{\sigma(p+q)}}, 
\end{align}
where $Sh(p,q)$ is the set of \textsl{$(p,q)$-shuffles}, i.e. the permutations 
$\sigma$ of $p+q$ numbers such that 
$$
\sigma(1)<\cdots<\sigma(p) \qquad\mbox{and}\qquad 
\sigma(p+1)<\cdots<\sigma(p+q).
$$
Both Hopf algebras are commutative and cocommutative: $\Hinv$ is the 
coordinate ring of the abelian group $\Ginv(\K)$, and $\Hinv^*$ is the 
enveloping algebra of the Lie algebra $\mathrm{Lie}\big(\Ginv(\K)\big)$, 
which is in fact the free abelian Lie algebra spanned by the variables 
$x_1,x_2,x_3,...$, and is obviously isomorphic to the polynomial ring 
$K[T]$ in one variable. 
\bigskip 

In the case of the group $\Gdif$, we get the two combinatorial Hopf algebras
\begin{align*}
& S^a(V)=\HFdB=\K[x_1,x_2,x_3,...], \quad\mbox{with free commutative product 
$\cdot$ and coproduct $\DFdB$}, \\ 
& S^c(V)=\HFdB^*=\K\langle e_1,e_2,e_3,...\rangle, 
\quad\mbox{with coproduct $\D_{\shu}$ and product $\ast=\DFdB^*$}. 
\end{align*}
In the symmetric case, the Loday-Ronco classification theorem (studied before 
also by T. Lada, M. Markl \cite{LM} and by D. Guin, J.-M. Oudom \cite{GO}) 
says the following: if $H\cong S^c(V)$ is endowed with the unshuffle 
coproduct $\D_{\shu}$, and if the product $\ast$ satisfies the dual of the 
right-sided property \eqref{rsp}, then $V$ is a \textsl{right pre-Lie 
algebra}, i.e. is equipped with a binary product 
$\lhd:V\otimes V\longrightarrow V$ such that:
\begin{align}
\label{right-pre-Lie}
(x\lhd y)\lhd z-x\lhd(y\lhd z)=(x\lhd z)\lhd y-x\lhd(z\lhd y).
\end{align}
Comparing with the results in the non-symmetric case, a pre-Lie product 
is an example of a brace product 
$\{\ ;\ \}:V\otimes T(V)\longrightarrow V$, which vanishes outside the 
subspace $V\otimes V$, that is 
$$\{x;y\}= x\lhd y \qquad\mbox{and}\qquad \{x,y_1\cdots y_m\}=0 
\quad\mbox{if $m\neq 1$}.$$ 
On the other side, any pre-Lie product on $V$ gives rise to a 
\textsl{symmetric brace product} $\{\ ;\ \}:V\otimes S(V)\longrightarrow V$, 
recursively defined by 
$$
\{x;1\}=x, \qquad \{x;y\}=x\lhd y, \qquad\mbox{and}\qquad \{x;y_1\cdots y_p\}=
\{x;y_1\cdots y_{p-1}\}\lhd y_{p}-\{x;\{y_1;y_2\cdots y_{p-1}\}\}. 
$$ 
Details can be found in \cite{GO} and \cite{LM}.\\

This general result applies in particular to the Hopf algebra 
$\HFdB^*=U(L_1)$ (modulo a switch between right and left properties), and tells 
that the Lie algebra of vector fields $L_1$ is a symmetric left brace algebra, 
and thus a left pre-Lie algebra \cite{BFM}: the opposite of the brace bracket 
given in \eqref{brace} is in fact symmetric, and induces on $L_1$ the left 
pre-Lie product 
$$
e_n\lhd e_m=\{e_n;e_m\}^{op}=(m+1)\ e_{n+m} 
$$ 
that we saw in section \ref{witt-virasoro-fdb}. 
\bigskip 

Note that the pre-Lie bracket defined on $L_1$ can be extended to a left 
pre-Lie bracket on the Lie algebra $\Vect(\S^1)=\Span_{\K}\{e_n,\ n\in \Z\}$. 

\subsubsection{The shuffle Hopf algebra}
A last example of a combinatorial Hopf algebra is the \textsl{shuffle Hopf algebra} 
$T^c(V)= \K\langle x_1,x_2,...\rangle$ endowed with the deconcatenation 
$\D_\otimes$ and the \textsl{shuffle product} 
$$
(x_{n_1}\otimes\cdots\otimes x_{n_p})\shu(x_{n_{p+1}}\otimes\cdots\otimes x_{n_{p+q}}) 
= \sum_{\sigma\in Sh(p,q)} x_{n_{\sigma(1)}}\otimes\cdots\otimes x_{n_{\sigma(p+q)}}. 
$$
We give the degree $n$ to the letter $x_n$. This commutative graded Hopf algebra represents the prounipotent group scheme $A\mapsto G_A$, whose Lie algebra is $\frak g_A=\frak g\otimes A$, where $\frak g$ is the free Lie algebra over the alphabet $\{x_1,x_2,\ldots\}$.
\section{Operadic interpretation}
{sect:operad}
Operads are combinatorial devices which appeared in algebraic topology, coined for coding ``types of algebras''. Hence, 
for example, a Lie algebra is an algebra over some operad denoted by $Lie$, 
an associative algebra is an algebra over some operad denoted by $Assoc$, 
a commutative algebra is an algebra over some operad denoted by $Com$, etc. The term "operad" has been invented by J.-P. May \cite{May}, although the notion itself appears a few years before in an article by J. M. Boardman and R. M. Vogt \cite{BV}. For a comprehensive and recent textbook on operads, we refer the reader to \cite{LV}. See also \cite{Loday96} for a more concise overview.
\subsection{Manipulating algebraic operations}

\noindent
For any $n\in \N$, an $n$-ary operation on a vector space $V$ is an 
$n$-multilinear map 
$$
a:V^{\otimes n}\longrightarrow V\ ,\ (v_1,...,v_n)\mapsto a(v_1,...,v_n)
$$
suitably represented by a box:
$$ 
\operation
$$  
where the $n$ inputs represent the variables $v_1,...,v_n$ and the output 
represents the result $a(v_1,...,v_n)$ of the operation. 
For $n=1$ there is a canonical operation, namely the identity map 
$\mop{Id}:V\to V$. One can even consider $0$-ary operations, as being just 
distinguished elements of $V$.\\

Given an $n$-ary operation $a$, one can consider new $n$-ary operations 
by permuting the inputs of $a$, that is, by setting 
$$
a^{\sigma}(v_1,...,v_n) = a(v_{\sigma(1)},...,v_{\sigma(n)}),
$$
for any permutation $\sigma\in S_n$. 
For instance, given a binary operation $a(v_1,v_2)=v_1\ast v_2$, one can 
consider the opposite operation 
$$
v_1\ast^{op} v_2= v_2\ast v_1 = a^{(12)}(v_1,v_2). 
$$ 
Given a ternary operation $b(v_1,v_2,v_3)$, one can define algtogether six operations 
$b_{\sigma}$ simply by permuting the inputs with a permutation $\sigma\in S_3$. 
In all cases, the symmetric group $S_n$ obviously acts (from the right) 
on the set of $n$-ary operations, by permuting their entries.\\

Given any two operations $a$ and $b$ on $V$, one also obtains new operations 
by applying them one after the other. For instance, if $a$ is an $n$-ary 
operation, and $b$ is an $m$-ary operation, we get an $(n+m-1)$-ary operation 
by applying first $b$ on any subset of $m$ variables among the $n+m-1$ 
available, and then applying $a$ to the $n$ variables thus obtained. 
The operation thus obtained is the \textsl{partial composition} 
$a\circ_i b$, for any choice of $i=1,...,n$ where the result of $b$ is 
inserted as a variable of $a$, namely 
$$
(a\circ_i b)(v_1,...,v_{n+m-1}) := 
a(v_1,\cdots,v_{i-1},b(v_i,....,b_{i+m-1}),v_{i+m},...,v_{n+m-1}). 
$$
Graphically: 
$$
\operade
$$  
For instance, if we compose a binary operation $a(v_1,v_2)=v_1\ast v_2$ 
with itself, we obtain two ternary operations 
$$
(v_1,v_2,v_3)\mapsto (v_1\ast v_2)\ast v_3,\qquad\mbox{and}\qquad 
(v_1,v_2,v_3)\mapsto v_1\ast (v_2\ast v_3)
$$
which do not change the order of the inputs, and twelve operations 
if we allow the permutations of the entries. 
\medskip 

The ``types of algebras'' that one may wish to consider are characterized 
by the properties of the operations that one can perform. The concept of 
operad emerges when one tries to write such properties only in terms of the 
operations, discarding the entries. For example, a vector space $V$ is an 
associative algebra if it is endowed with a binary operation 
$a:V^{\otimes 2}\to V$, denoted by $\ast$, which is associative, 
i.e. for any $x,y,z\in V$ we have: 
$$
(x\ast y)\ast z = x\ast (y\ast z) \qquad\mbox{that is}\qquad 
a \circ_1 a = a \circ_2 a. 
$$
Similarly, a vector space $V$ is a Lie algebra if it is endowed with a binary 
operation $a:V^{\otimes 2}\to V$, denoted by $[\ ,\ ]$, which is antisymmetric 
and satisfies the Jacobi identity, i.e. for any $x,y,z\in V$ we have:  
\begin{eqnarray*}
[x,y]+[y,x]&=&0  \hbox{ that is } a + a^{(12)} = 0,\\ 
\,\![[x,y],z]+[[y,z],x]+[[z,x],y]&=&0 \hbox{ that is } 
a\circ_1 a + (a\circ_1 a)^{\tau} + (a\circ_1 a)^{\tau^2} =0, 
\end{eqnarray*}
where $\tau$ is the circular permutation $(123)\in S_3$. 

\subsection{Algebraic operads}
\label{operads}

An \textsl{algebraic operad}, or \textsl{linear operad}, is a collection 
$\P=\big(\P(n)\big)_{n\ge 0}$ of $\K$-vector spaces $\P(n)$ 
on which the symmetric group $S_n$ acts from the right, together with a 
distinguished element $e\in\P(1)$, called the \textsl{identity}, 
and a collection of linear maps, called \textsl{partial compositions}, 
\begin{align*}
\circ_i:\P(k)\otimes\P(l) &\longrightarrow \P(k+l-1),\hskip 8mm
i=1,\ldots ,k\\
(a,b)&\longmapsto a\circ_i b
\end{align*}
satisfying the following associativity, unit and equivariance axioms: 
\begin{itemize}
\item
The \textsl{nested} and the \textsl{disjoint associativity}: 
for any $a\in \P(k)$, $b\in\P(l)$, $c\in\P(m)$ one has:
\begin{align}
(a\circ_i b)\circ_{i+j-1} c &= a\circ_i(b\circ_j c),
\qquad i\in\{1,\ldots,k\}, \quad j\in \{1,\ldots,l\},\\
(a\circ_i b)\circ_{l+j-1}c &= (a\circ_j c)\circ_i b, 
\qquad i,j\in\{1,\ldots,k\},\quad i<j.
\end{align}
Graphically, these compositions produce the following operations: 
\begin{equation*}
\asoperade
\end{equation*}
\item
The \textsl{unit property}:
\begin{align}
e\circ a&=a\\
a\circ_ie&=a,\qquad i=1,\ldots ,k. 
\end{align}
\item
The \textsl{equivariance property}:
\begin{equation}
a^\sigma\circ_{\sigma(i)} b^\tau=(a\circ_i b)^{\eta_i(\sigma,\tau)}
\end{equation}
where $\eta_i(\sigma,\tau)\in S_{k+l-1}$ is defined by letting $\tau$ permute
the set $E_i=\{i,i+1,\ldots,i+l-1\}$ of cardinality $l$, and then by letting
$\sigma$ permute the set $\{1,\ldots,i-1,E_i,i+l,\ldots,k+l-1\}$ of 
cardinality $k$. 
\end{itemize}
\bigskip 

The prototype of algebraic operads is the \textsl{endomorphism operad} 
on a vector space $V$, denoted by $\mop{End}(V)$. For any $n\geq 0$, 
it is given by the vector space of $n$-multilinear maps on $V$, that is,
\begin{equation*}
\mop{End}(V)(n)=\Cal L(V^{\otimes n},V).
\end{equation*}
The right action of the symmetric group $S_n$ on $\mop{End}(V)(n)$ is 
induced by the left action of $S_n$ on $V^{\otimes n}$ given by:
\begin{equation*}
\sigma.(v_1\otimes\cdots\otimes v_n) = 
v_{\sigma^{-1}_1}\otimes\cdots\otimes v_{\sigma^{-1}_n}.
\end{equation*}
The unit element is the identity map $e:V\to V$, and the partial compositions 
are given by true compositions of linear maps.\\

Given an operad $\P$, a \textsl{$\P$-algebra} is a vector space $A$ 
together with a morphism of operads from $\P$ to $\mop{End}(A)$, 
that is, for any $n\geq 0$, an equivariant linear map 
$$
\P(n) \longrightarrow \mop{End}(A)(n)=\Cal L(A^{\otimes n},A) 
$$
which identifies each element $a$ of $\P(n)$ with an $n$-ary operation 
on $A$, and which preserves the partial compositions. 
Given two $\P$-algebras $A$ and $B$, a \textsl{morphism of 
$\P$-algebras} from $A$ to $B$ is a linear map $f:A\to B$ such that, 
for any $n\ge 0$ and for any $a\in\P(n)$, the following diagram commutes,
\diagramme{
\xymatrix{A^{\otimes n} \ar[rr]^{f^{\otimes n}}\ar[d]^{a} & & B^{\otimes n}\ar[d]^{a}\\ 
A \ar[rr]_f & & B }
}
\noindent where we have denoted by the same letter $a$ the element of 
$\P(n)$ and its images in $\mop{End}(A)(n)$ and $\mop{End}(B)(n)$.\\

\noindent
Using the partial compositions one can also define a \textsl{total
composition}
\begin{align*}
\gamma:{\P}(n)\otimes{\P}(k_1)\otimes\cdots\otimes{\P}(k_n)&
\longrightarrow {\P}(k_1+\cdots +k_n) \\
(a,b_1,\ldots,b_n) & \longmapsto 
\gamma(a;b_1,\ldots ,b_n)=\Big(...\big((a\circ_n b_n)\circ_{n-1} b_{n-1}\big)\cdots\Big)\circ_1 b_1, 
\end{align*}
which is graphically represented as follows:
\begin{equation*}
\gcoperade
\end{equation*}
Such a map allows us to regard an operad $\P$ as a \textsl{Schur functor} \cite{Macdonald, LV}
on $\K$-vector spaces: 
$$
\P: \Vect_{\K}\longrightarrow \Vect_{\K},\ 
V \longmapsto \P(V)= \bigoplus_{n\geq 0} \P(n)\otimes_{S_n} V^{\otimes n}
$$
endowed with an associative composition 
$\gamma: \P\circ \P\longrightarrow \P$ and with a unit 
$i:\Cal I \hookrightarrow \P$ from the trivial operad 
$\Cal I(n)=\delta_{n,1} \K e$. 
A $\cal P$-algebra can then be also seen as a vector space $A$ endowed 
with an associative linear map $\gamma_A:\P\circ\P(A)\longrightarrow \P (A)$,  
i.e. such that $\gamma_A(\gamma_A\circ I)=\gamma_A(I\circ \gamma_A):\P\circ\P\circ \P (A)\longrightarrow\P(A)$. The usual type of algebras give rise to the following operads: 
\begin{itemize}
\item 
$Assoc$ for associative algebras: $Assoc(n)=\K[S_n]$.

\item 
$Com$ for associative and commutative algebras: $Com(n)$ is given by the 
trivial representation of $S_n$.

\item
$Lie$ for Lie algebras: the vector space $Lie(n)$ is the representation 
of $S_n$ induced from the one-dimensional representation $\rho$ of $C_n=\Z/\Z_n$ given by a 
primitive $n$-th root of unity, i.e. $Lie(n)=\Ind_{C_n}^{S_n}(\rho)$.

\item 
$preLie$ for right pre-Lie algebras \cite{Gerstenhaber,Vinberg}: 
as a vector space $preLie(n)=\Span_{\K}\{\Cal T_n\}$ is spanned by the rooted 
trees with $n$ labeled vertices, and the symmetric groups $S_n$ acts by 
permuting the labels \cite{ChapotonLivernet}.
\end{itemize}
For more details about operads, see e.g. \cite{Loday96,LV}.


\subsection{Pre-Lie algebras, Lie algebras and groups associated to operads}\label{pl-op}
For operads $\P$ such that $\P(0)=\{0\}$, there is a canonical way to 
construct a pronilpotent Lie algebra and, dually, a prounipotent group. 
For simplicity, let us illustrate this construction for regular operads. 
\bigskip 

An operad $\P$ is \textsl{regular} \cite[Paragraph 5.2.9]{LV} if for any 
$n\ge 0$ we have $\P(n)=\P_n\otimes \K[S_n]$, where $\P_n$ is some vector space. In this case, the operad 
$\P$ gives rise to the \textsl{non-symmetric operad} 
$\P'=(\P_n)_{n\geq 0}$, which verifies axioms similar to those of an operad, 
except that the actions of symmetric groups are not required. Any operad 
is also a non-symmetric operad, if we forget the symmetric group actions. 
An algebra over a non-symmetric operad $\P'$ is a vector space $A$ 
together with a non-symmetric operad morphism from $\P'$ into the 
underlying non-symmetric operad of $\mop{End}(A)$.\\

An operad $\P$ is regular if the relations defining the $\P$-algebras 
can be written without switching the arguments. For example, the associativity 
identity $(x\ast y)\ast z=x\ast (y\ast z)$ does not change the order of the 
variables: thus $Assoc$ is regular. Indeed $Assoc(n)=\K[S_n]$, and the 
associated non-symmetric operad $Assoc'$ is given by $Assoc'_n=\K$. 
The operads $Com$, $Lie$ and $preLie$ are not regular.\\

Let $\P$ be a regular operad, hence $\P(n)=\P_n\otimes \K[S_n]$. 
The \textsl{Lie algebra associated to $\P$} was introduced by M. Kapranov 
in 2000, see \cite{KM}, as the vector space 
$$L^{\P} = \bigoplus_{n=2}^\infty\ \P_n= 
\left\{ \sum_{\mbox{\footnotesize finite}}\ p_n\ \Big|\ p_n\in \P_n \right\}$$
with Lie bracket $[p,q]= p\lhd q-q\lhd p$ induced by the right pre-Lie product 
$$p\lhd q=\sum\ \g(q;\id,...,p,...,\id),$$
where the sum runs on the possible places to put $p$. 
Since $\P_n \lhd \P_m\subset \P_{n+m-1}$, the above pre-Lie and Lie products 
are graded by $|p|=n-1$ for $p\in\P_n$. As the sum starts at $n=2$, this Lie algebra is pronilpotent. The completion $\widehat{L^{\P}}$ 
of the Lie algebra $L^{\P}$, with respect to this graduation, contains formal 
series, i.e. possibly infinite sums $\sum_{n\geq 2} p_n$, with $p_n\in\P_n$.
\medskip 

If the operad $\P$ is not regular, a similar construction makes sense if 
we consider the sum of coinvariant spaces, with respect to the action of 
the symmetric group, that is, if we set:
$$
L^{\P}= \bigoplus_{n=2}^\infty \P(n)/S_n.
$$
Pre-Lie and Lie products are defined similarly, as the sum of all partial 
compositions still makes sense modulo the symmetric group actions.
\bigskip  

Let $\P$ be a regular operad which satisfies
${\P}(0)=\{0\}$ and ${\P}(1)=\K e$. 
The \textsl{group associated to $\P$} was defined independently by 
F. Chapoton \cite{Chapoton} and P. van der Laan \cite{VanDerLaan} as the set 
$$G^{\P}= \left\{ \sum_{n=1}^\infty\ p_n\ \Big|\ 
p_n\in \P_n \hbox{ and } p_1=e \right\},$$ 
with the composition law:
$$\sum_{n\ge 1} p_n \circ \sum_{m\ge 1}\ q_m = 
\sum_{n\ge 1}\sum_{m_1,\ldots m_n\ge 1} \g(p_n;q_{m_1},...,q_{m_n}).$$ 
A similar construction is again possible for non-regular operads, 
if we replace $\P_n$ by $\P(n)/S_n$. The group $G^{(\P)}$ is prounipotent, 
with associated pronilpotent Lie algebra $\widehat{L^{\P}}$.
\medskip 

Two Hopf algebras can then be associated to $\P$: the co-commutative Hopf 
algebra $U(L^{\P})$, and its graded dual $H^{\P}$, which is commutative 
and represents the proalgebraic group $G^{\P}$, i.e. 
$G^{\P}(A)=\mop{Hom}_{\smop{CAlg}}(H^\P,A)$ for any unital and commutative 
algebra $A$.

\subsubsection{The Fa\`a di Bruno Hopf algebra and the associative operad}

F. Chapoton and P. van der Laan proved \cite{Chapoton,VanDerLaan} that 
the group $G^{\P}$ is the group of formal diffeomorphisms $\Gdif$ if $\P$ is the 
non-symmetric operad $Assoc$ of associative algebras. In this case, 
in fact, in each degree $n\geq 1$ there is only one possible operation, 
up to a scalar factor and permutation. If we choose a generator $p_n$ for each vector space $Assoc'_n$ (with the notation introduced in the beginning of Section \ref{pl-op}), an element of the group $G^{\P}$ is a formal series 
$$
f= \sum_{n=1}^\infty\ f_n\ p_n, \qquad\mbox{with $f_n\in\K$ and $f_1=1$}, 
$$
and the operadic composition of two such series gives exactly the 
Fa\`a di Bruno composition of diffeomorphisms.  If
$\P$ is the (non-regular) operad $Com$ of commutative algebras, the construction above gives back the group $\Ginv$ \cite{Chapoton}.\\

A similar construction of the group $\Gdif$ was done in \cite{Frabetti} 
for set-operads. In this case, there was proven a general criterion to compare 
the group $\Gdif$ with the group $G^{\P}$ associated to suitable operads. An operad $\P$ is a \textsl{set-like operad} if the relations defining 
the operations do not involve sums or multiplications by scalars. 
For instance, the operads $Assoc$ and $Com$ are set-like. 
A set-like operad $\P$ is then the collection of the vector spaces $\P(n)$ 
generated by some sets $\P_{\smop{set}}(n)$. The collection 
$\P_{\smop{set}}=\big(\P_{\smop{set}}(n)\big)_{n\ge 1}$ gives rise to a 
\textsl{set operad}, that is, an operad in the category of sets. 
The axioms for set operads are the same as for linear operads, except that 
the tensor product of vector spaces is replaced by the cartesian product 
of sets, and the direct sum is replaced by the disjoint union. 
If moreover the set-like operad is regular, the associated set operad is 
regular as well, i.e. $\P_{\smop{set}}(n)=\P_{\smop{set},n}\times S_n$ for any 
$n\ge 1$.\\

\noindent
If $\P$ is a regular set-like operad, the proalgebraic group $G^\P$ was called the 
\textsl{group of $\P$-expanded series} in \cite{Frabetti}:
$$G^{\P}(A) = 
\big\{ f(x)=\sum_{p\in\P_n}\ f_p\ x^p\ \big|\ f_p\in A \big\},$$ 
where $x^p$ is just a symbol, with product given by a formal composition law
$$(f\circ g)(x) = \sum_{p\in\P}\  \sum_{q_1,...,q_{|p|}\in \P}\ 
f_p\ g_{q_1}\cdots g_{q_{|p|}}\ x^{\g(p,q_1,...,q_{|p|})}.$$ 
\bigskip 

The simplest example of a regular set-like operad is $Assoc$, with one single $n$-ary operation up to permutation for any $n\ge 1$ which is denoted by $n-1$. Hence the non-symmetric set 
operad $Assoc'_{\smop{set}}$ can be identified with the set $\N$ of 
(non-negative) natural numbers. The operadic partial composition of two 
operations $p$ and $q$ (which is unique by virtue of the associativity) 
coincides with the sum $p+q$. Similarly, the total composition 
$\gamma(n;m_0,m_1,...,m_n)$ coincides with the sum $m_0+\cdots + m_n$. 
For any unital commutative algebra $A$, the group $G^{Assoc'_{\smop{set}}}(A)$ is 
again the group of formal diffeomorphisms $\Gdif(A)$. In \cite{Frabetti} it was shown also that, for any regular set operad $\P$, 
the order map  
$$\P\longrightarrow Assoc'_{\smop{set}},\quad p\mapsto |p|$$ 
induces a canonical surjective morphism of groups 
$G^{\P}(A) \to\!\!\!\!\!\to \Gdif(A)$.  
Moreover, any element $p_2\in \P_2$ which satisfies the associativity axiom, 
if it exists, gives an operad morphism 
$$
Assoc'_{\smop{set}} \longrightarrow \P, 
\quad n \longmapsto p_n=\gamma(p_2;p_{n-1},e)
$$
which induces a section $\Gdif(A)\longrightarrow G^{\P}(A)$. 

\subsubsection{The QED charge Hopf algebra on planar binary trees and the 
duplicial operad}
Another example of a regular set operad is the operad $Dup$ which defines 
\textsl{duplicial algebras} \cite{Z}, that is, algebras $A$ endowed with two 
binary operations \textsl{over} $\over$ and \textsl{under} $\under$, 
satisfying the following identities: 
\begin{align*}
(x\over y)\over z &= x\over (y\over z)  \\ 
(x\over y)\under z &= x\over (y\under z) \\ 
(x\under y)\under z &= x\under (y\under z), 
\end{align*}
for any $x,y,z\in A$. The operad Dup is described by means of 
\textsl{planar binary trees} \cite{Frabetti,LodayTriple}, that is, 
planar rooted trees with internal vertices of valence $3$. 
For any $n\geq 0$, $Dup_n$ is the set of trees with $n$ internal vertices:
\begin{align*}
& Dup_0 = \left\{ \treeO\right\}, \qquad Dup_1= \left\{ \treeA \right\}, 
\qquad Dup_2 = \left\{ \treeAB, \treeBA \right\}, \\ 
& Dup_3 = \left\{ \treeABC, \treeBAC, \treeACA, \treeCAB, \treeCBA \right\}. 
\end{align*}
The operations over and under are defined on any two trees $t$ and $s$ 
as the following grafting of the root of a tree onto the left-most 
or onto the right-most leaf of the other one: 
\begin{align*}
\mbox{$t$~{\em over}~$s$:}\qquad & t \over s =   \lgraft{s}{t} \\ 
\mbox{$t$~{\em under}~$s$:}\qquad & t \under s =  \rgraft{t}{s}
\end{align*}
The operadic composition in Dup is then given by the following rule: 
for any trees $t$ and $s_1,...,s_{|t|}$, the tree 
$\gamma(t;s_1,...,s_{|t|})$ is obtained by replacing each internal vertex 
of $t$ by the trees $s_1,...,s_{|t|}$, in the order given by the decomposition 
of $t$ as a monomial in the vertex tree $\raise 2pt\hbox{$\treeA$}$, 
using the over and under products and suitable parentheses. 
For instance, 
\begin{align*}
 &\treeBAC = (\treeA \under \treeA) \over \treeA  
\quad\mbox{hence}\quad 
\mu_{\treeBAC}(s_1,s_2,s_3) = (s_1 \under s_2) \over s_3 
= \lgraft{s_3}{\rgraft{s_1}{s_2}}. 
\end{align*}

The group $G^{Dup}(A)$ of \textsl{tree-expanded series} of the form 
$$
f(x)=\sum_{t\in Dup} f_t\ x^t, 
\qquad\mbox{with $f_t\in A$ and $f_Y=1$}
$$
was studied extensively in \cite{Frabetti}. Since the operad $Dup$ 
contains a binary associative operation $\raise 2pt\hbox{$\treeA$}$, 
the group $G^{Dup}(A)$ projects onto the group $\Gdif(A)$ and 
at the same time contains a copy of $\Gdif(A)$. 
Moreover, this proalgebraic group is represented by a commutative 
Hopf algebra $H_{Dup}$ on planar binary trees which admits a non-commutative 
lift different from the Loday-Ronco Hopf algebra on planar binary trees 
\cite{LR}, and different from its linear dual.\\

Finally, the group $G^{Dup}(\CC)$ contains a proalgebraic subgroup 
$G^\alpha(\CC)$ whose representative Hopf algebra $H^{\alpha}$ was used 
in \cite{BFqedren,BFqedtree} to describe the renormalization of the 
electric charge in massless quantum electrodynamics. The algebra $H^{\alpha}$ 
is a suitable quotient of the algebra $H_{Dup}$, such that the group 
$G^\alpha(\CC)$ can be seen as the set of tree-expanded series of the form 
$$
\alpha_f(x)= \Big(x^{\treeO}-x^{\treeA}\under f(x)\Big)^{-1} \over x^{\treeA}, 
$$
for any tree-expanded series $f(x)=\sum f_t\ x^t$. Here the inversion 
of the series $x^{\treeO}-x^{\treeA}\under f(x)$ is performed in the group 
of \textsl{tree-expanded invertible series}, which is the set of tree-expanded 
series starting with the term $x^{\treeO}$, and considered with the product
\textsl{over}.


\end{document}